\newtheorem{theorem}{\bf Theorem}[section]
\newtheorem{definition}[theorem]{\bf Definition}
\newtheorem{lemma}[theorem]{\bf Lemma}
\newtheorem{prop}[theorem]{\bf Proposition}
\newtheorem{remark}[theorem]{\bf Remark}
\def\no{\noindent}
\def\Om{\Omega}
\def\pa{\partial}
\def\ov{\overline}
\newcommand{\Rmnum}[1]{\expandafter\@slowromancap\romannumeral #1@}
\makeatother \numberwithin{equation}{section}
\begin{document}
	
	\begin{center}{\LARGE \bf Rigidity of conformal minimal
			immersions of constant curvature from $S^2$ to $Q_4$}
	\end{center}

	\begin{center}
		Xiaoxiang Jiao,
		\footnote{
			X.X. Jiao
			
			School of Mathematical Sciences, University of Chinese Academy of
			Sciences, Beijing 100049, P. R. China
			
			e-mail: xxjiao@ucas.ac.cn}
		Mingyan Li
		\footnote{
			M.Y. Li (Corresponding author)
			
			School of Mathematics and Statistics, Zhengzhou University, Zhengzhou 450001, P. R. China
			
			e-mail: limyan@zzu.edu.cn
		}
		and 
		Hong Li
		\footnote{
			H. Li 
			
			School of Mathematical Sciences, University of Chinese Academy of
			Sciences, Beijing 100049, P. R. China
			
			e-mail: lihong16@mails.ucas.ac.cn}
	\end{center}

	\bigskip
	
	\no
	{\bf Abstract.}
	Geometry of  conformal minimal two-spheres immersed in $G(2,6;\mathbb{R})$  is studied in this paper by harmonic maps.
	We construct a non-homogeneous constant curved minimal two-sphere in $G(2,6;\mathbb{R})$, and give a classification theorem of linearly full  conformal minimal
	immersions of constant curvature from $S^2$ to $G(2,6;\mathbb{R})$, or equivalently, a complex hyperquadric $Q_{4}$, which illustrates minimal two-spheres  of constant curvature in $Q_{4}$ are in general not congruent.\\
	
	\no
	{\bf{Keywords and Phrases.}} Conformal minimal immersion, Gauss
	curvature, Second fundamental form, Complex hyperquadric,
	classification.\\
	
	\no
	{\bf{Mathematics Subject Classification (2010).}} Primary 53C42, 53C55.
	
	\no
	Project supported by the NSFC (Grant No. 11871450).
	
	\bigskip

	\bigskip
	
	\section{Introduction}\label{sec1}
	It is a long history of studying conformal minimal two-spheres with constant curvature
	in various Riemannian spaces (see \cite{2, 3, 5, 12}). In 1988 Bolton et al
	\cite{2} studied properties about conformal minimal two-spheres in a complex projection space $\mathbb{C}P^n$ and proved that any linearly full conformal minimal immersion
	of constant curvature
	from $S^2$ to $\mathbb{C}P^n$ belongs to the Veronese sequence, up to a rigid motion.  It is well known that, this rigidity fails for conformal minimal two-spheres of constant curvature immersed in general Riemannian symmetric spaces,
	for example, complex Grassmannian $G(k,n;\mathbb{C})$, complex
	hyperquadric $Q_n$ and quaternionic projective space $HP^n$ and so
	on. Recently, we got a classification theorem of
	linearly full totally unramified conformal minimal immersions of
	constant curvature from $S^2$ to $Q_3$ (\cite{13}), which showed that all such immersions can be presented by Veronese curves in $\mathbb{C}P^4$ (\cite{13}, Theorem 4.9). For general linearly full totally unramified conformal minimal two-spheres immersed in complex hyperquadric  $Q_n$, we obtained a classification theorem under some conditions (\cite{10}, Theorem 4.6).

	As is well known, complex
	hyperquadric $Q_{n-2}$ may be identified with $G(2,n;\mathbb{R})$, which is considered as a totally
	geodesic submanifold in complex Grassmann manifold
	$G(2,n;\mathbb{C})$ (for detailed
	descriptions see the Preliminaries below). In 1986 Burstall and
	Wood \cite{4} gave the explicit construction of all two-spheres in $G(2,n;\mathbb{C})$, they pointed out that, any harmonic map from $S^2$ to $G(2,n;\mathbb{C})$ can be obtained from a holomorphic map, a Frenet pair or a mixed pair. For the special case $G(2,n;\mathbb{R})$, Bahy-El-Dien and
	Wood \cite{1} gave their explicit construction in 1989.

	The purpose of this paper is to apply the method of harmonic maps they gave and to derive a classification of conformal minimal immersions
	of constant curvature from $S^2$ to $G(2,6;\mathbb{R})$.  By doing this, we hope that some insight can be gained on geometry for general cases , i.e.,  for any positive integer $n$.

	It is well known that constant curved minimal two-spheres in $S^n(1)$ and $\mathbb{C}P^n$ are homogeneous, they also determined the values distribution of the constant curvature completely.  Papers \cite{10, 13} proved that constant curved minimal two-spheres in  $Q_2$ and   $Q_3$ are also homogeneous, a natural question is the following:
	
	\textbf{Problem.} Does the minimal two-spheres with constant curvature in  $Q_{n}$ must be homogeneous?
	
	In this paper, we give a negative answer to the problem stated above by constructing a non-homogeneous constant curved minimal two-sphere in $Q_{4}$:
	$$f^{(3)}_0=[(1+z^3, \  \sqrt{-1}(1-z^3),   \  \sqrt{3}z-\frac{z^2}{\sqrt{3}},   \  \sqrt{-1}(\sqrt{3}z+\frac{z^2}{\sqrt{3}}),   \ 
	\frac{\sqrt{8}}{\sqrt{3}}z^2,  \ 
	\frac{\sqrt{-8}}{\sqrt{3}}z^2)^T].$$
	Here $f^{(3)}_0$ is of constant Gauss curvature $\frac{2}{3}$. It  is the first curve that is non-homogeneous under the assumption of constant Gauss curvature, which we cannot find yet in any literature and made much effort for it. It plays a key role in our later work.

	Our paper is organized as follows.  In Section 2,  we
	identify $Q_{n-2}$ and $G(2,n;\mathbb{R})$,  state some
	fundamental results concerning $G(k,n;\mathbb{C})$ from the view of
	harmonic sequences.  In Section 3, we  introduce the definition of degree of a smooth map from a compact Riemann surface to  $G(k,n;\mathbb{C})$, and then show some brief descriptions of
	Veronese sequence and the rigidity theorem in $\mathbb{C}P^n$. In
	Section 4,  we  present some properties of the harmonic sequences generated by  reducible harmonic maps from $S^2$ to $G(2,6;\mathbb{R})$, and obtain the explicit characteristics of the corresponding harmonic maps in $G(2,6;\mathbb{R})$.  Moreover we classify all  reducible harmonic maps of  $S^2$ in $G(2,6;\mathbb{R})$ under the assumption that they have constant curvature (see Proposition 4.3).  In  Section 5, using Burstall, Bahy-El-Dien and Wood's results \cite{1, 4} , we discuss geometric properties of irreducible harmonic maps of two-spheres  in $G(2,6;\mathbb{R})$ with constant curvature and give a classification theorem of linearly full totally unramified  conformal minimal
	immersions of constant curvature from $S^2$ to $G(2,6;\mathbb{R})$,
	or equivalently, a complex hyperquadric $Q_{4}$ (see Theorem 5.7).

	\section{Minimal immersions and harmonic sequences in $G(k,n;\mathbb{C})$}\label{sec2}

	For $0 < k < n$,  we consider complex Grassmann manifold  $G(k,n;\mathbb{C})$ as the
	set of Hermitian orthogonal projections from $\mathbb{C}^n$ onto a
	$k$-dimensional subspace in $\mathbb{C}^n$. Here $\mathbb{C}^n$ is endowed with the Hermitian inner product $\langle \cdot, \cdot\rangle$ defined by
	$$\langle x, y\rangle=x_1\overline{y}_1+\cdots+x_n\overline{y}_n,$$ where $x=(x_1,\ldots,x_n)^{T}$ and $y=(y_1,\ldots,y_n)^{T}$ are two elements of $\mathbb{C}^n$.

	Let $G(k,n;\mathbb{R})$ denote the
	Grassmannian of all real $k$-dimensional subspaces of $\mathbb{R}^n$
	and $$\sigma: G(k,n;\mathbb{C}) \rightarrow G(k,n;\mathbb{C})$$
	denote the complex conjugation of $G(k,n;\mathbb{C})$. It is easy to
	see that $\sigma$ is an isometry with the standard Riemannian metric
	of $G(k,n;\mathbb{C})$, its fixed point set is $G(k,n;\mathbb{R})$.
	thus $G(k,n;\mathbb{R})$ lies totally geodesically  in
	$G(k,n;\mathbb{C})$.
	
	Map $$Q_{n-2} \rightarrow G(2,n;\mathbb{R})$$ by $$q \mapsto
	\frac{\sqrt{-1}}{2} \mathrm{Z} \wedge \overline{\mathrm{Z}},$$ where $q\in Q_{n-2}$
	and $\mathrm{Z}$ is a homogeneous coordinate vector of $q$. It is clear that
	the map  is
	one-to-one and onto, and it is an isometry. Thus we can identify
	$Q_{n-2}$ and $G(2,n;\mathbb{R})$ (for more details see \cite{15}).

	Next, we simply introduce general expressions of some geometric
	quantities about conformal minimal immersions from $S^2$ to complex
	Grassmannian manifold $G(k,n;\mathbb{C})$.

	Let $U(n)$ be the unitary group, $M$ be a simply connected domain in the unit sphere $S^2$ and $(z, \overline{z})$ be a complex coordinate on $M$. We take the metric $ds_M^2=dzd\overline{z}$ on $M$. Denote
	$$A_z=\frac{1}{2}s^{-1}\partial s, \quad A_{\overline{z}}=\frac{1}{2}s^{-1}\overline{\partial} s,$$
	where $s: M\rightarrow U(n)$ is a smooth map, $\partial = \frac {\partial}{\partial z}, \ \overline{\partial}
	= \frac {\partial}{\partial \overline{z}}$.
	
	Then $s$ is a harmonic map if and only if it satisfies the following equation (cf. \cite{14}):
	$$\overline{\partial} A_z=[A_z, A_{\overline{z}}]. $$
	Suppose that $s:S^2 \rightarrow U(n)$ is an isometric immersion, then $s$ is conformal and minimal if it is harmonic. Let $\omega$ be the Maurer-Cartan form on $U(n)$, and let $ds_{U(n)}^2=\frac{1}{8}tr\omega\omega^{*}$ be the metric on $U(n)$. Then the metric induced by $s$ on $S^2$ is locally given by
	$$ds^2=-tr A_zA_{\overline{z}}dzd\overline{z}.$$

	We consider the complex Grassmann manifold  $G(k,n;\mathbb{C})$ as
	the set of Hermitian orthogonal projection from $\mathbb{C}^n$ onto
	a $k$-dimensional subspace in $\mathbb{C}^n$. Then a map $\phi:
	M\rightarrow G(k,n;\mathbb{C})$ is a Hermitian orthogonal projection
	onto a $k$-dimensional subbundle $\underline{\phi}$ of the
	trivial bundle $\underline{\mathbb{C}}^n = M \times \mathbb{C}^n$
	given by setting the fibre $\underline{\phi}_x = \phi (x)$ for
	all $x\in M$.  $\underline{\phi}$ is called (a)
	\emph{harmonic ((sub-) bundle) } whenever $\phi$ is a harmonic
	map. Here $s=\phi-\phi^{\bot}$ is a map from $S^2$ into $U(n)$. It is well known that $\phi$ is harmonic if and only if $s$ is harmonic.

	For a conformal minimal immersion $\phi: S^2 \rightarrow
	G(k,n;\mathbb{C})$,two harmonic sequences are derived as follows:
	\begin{equation}\underline{\phi}=\underline{\phi}_{0}\stackrel{\partial{'}}
		{\longrightarrow }\underline{\phi}_1\stackrel{\partial{'}}
		{\longrightarrow}\cdots\stackrel{\partial{'}}
		{\longrightarrow}\underline{\phi}_ {i}\stackrel{\partial{'}}
		{\longrightarrow}\cdots, \label{eq:2.1}
	\end{equation}
	\begin{equation}\underline{\phi}=\underline{\phi}_{0}\stackrel{\partial{''}}
		{\longrightarrow}\underline{\phi}_{-1}\stackrel{\partial{''}}
		{\longrightarrow}\cdots\stackrel{\partial{''}} {\longrightarrow}
		\underline{\phi}_{-i}\stackrel{\partial{''}}{\longrightarrow}\cdots,
		\label{eq:2.2}
	\end{equation} where $\underline{\phi}_{i} = \partial ^\prime
	\underline{\phi} _{i-1}$ and $\underline{\phi}_{-i}=
	\partial ^{ \prime \prime} \underline{\phi} _{-i+1} $ are Hermitian
	orthogonal projections from $S^2 \times \mathbb{C} ^n$ onto
	${\underline{Im}}\left(\phi^{\perp}_ {i-1}\partial \phi_{i-1}
	\right)$ and ${\underline{Im}}\left(\phi^{\perp}_
	{-i+1}\overline{\partial} \phi_ {-i+1}\right)$ respectively, in the following we also denote them by $\partial^{(i)}\underline{\phi}$ and $\partial^{(-i)}\underline{\phi}$ respectively,
	$i=1,2,\ldots$.

	Now recall (\cite{4}, \S3A) that a harmonic map $\phi: S^2
	\rightarrow G(k,n;\mathbb{C})$ in \eqref{eq:2.1} (resp.
	\eqref{eq:2.2}) is said to be \emph{$\partial^{'}$-irreducible}
	(resp. \emph{$\partial^{''}$-irreducible}) if rank
	$\underline{\phi} =$ rank $\underline{\phi}_1$ (resp. rank
	$\underline{\phi} =$ rank $\underline{\phi}_{-1}$) and
	\emph{$\partial^{'}$-reducible} (resp.
	\emph{$\partial^{''}$-reducible}) otherwise. In particular, if
	$\phi$ is a harmonic map from $S^2$ to $G(k,n;\mathbb{R})$, then
	$\phi$ is $\partial^{'}$-irreducible (resp.
	$\partial^{'}$-reducible) if and only if $\phi$ is
	$\partial^{''}$-irreducible (resp. $\partial^{''}$-reducible). In
	this case we simply say that $\phi$ is irreducible (resp.
	reducible).

	As in \cite{7} call a harmonic map $\phi: S^2 \rightarrow
	G(k,n;\mathbb{C})$ \emph{(strongly) isotropic} if $\phi_{i} \bot
	\phi,   \  \forall i\in \mathbb{Z}, \ i \neq 0$.
	
	For an arbitrary harmonic map $\phi: S^2 \rightarrow
	G(k,n;\mathbb{C})$, define its \emph{isotropy order} (cf. \cite{4})
	to be the greatest integer $r$ such that $\phi_{i} \bot \phi$
	for all $i$ with $1\leq i \leq r$;  if $\underline{\phi}$ is
	isotropic, set $r= \infty$.

	\begin{definition}
		\emph{Let $\phi: S^2 \rightarrow G(k,n;\mathbb{C})$ be a map.
			$\phi$ is called}  linearly full \emph{if $\underline{\phi}$ can not
			be contained in any proper trivial subbundle $S^2 \times
			\mathbb{C}^m$ of $S^2\times { \mathbb{C}}^n$ ($m< n$)}.
	\end{definition}

	In this paper, we always assume that $\phi$ is linearly full.

	Suppose that $\phi: S^2 \rightarrow G(k,n;\mathbb{C})$ is a
	linearly full harmonic map and it belongs to the following harmonic
	sequence
	\begin{equation}\underline{\phi}_0\stackrel{\partial{'}}
		{\longrightarrow}\cdots \stackrel{\partial{'}} {\longrightarrow}
		\underline{\phi}= \underline{\phi}_{i} \stackrel {\partial{'}}
		{\longrightarrow} \underline{\phi}_{i+1}\stackrel{\partial{'}}
		{\longrightarrow}\cdots \stackrel{\partial{'}} {\longrightarrow}
		\underline{\phi}_{i_0}\stackrel{\partial{'}} {\longrightarrow}
		0\label{eq:2.3}
	\end{equation} for some $i = 0, \ldots, i_0$. We choose the local unit
	orthogonal frame $e^{(i)}_1, e^{(i)}_2, \ldots, e^{(i)}_{k_{i}}$
	such that they locally span subbundle $\underline{\phi}_{i}$ of
	$S^2 \times \mathbb{C}^n$, where $k_i = $ rank
	$\underline{\phi}_{i}$.

	Let $W_{i}=\left(e^{(i)}_1, e^{(i)}_2, \ldots,
	e^{(i)}_{k_{i}}\right)$ be an $\left(n\times k_{i}\right)$-matrix. Then
	we have
	$$\phi_{i}=W_{i}W^*_{i},$$
	\begin{equation}
		W^*_{i}W_{i} = I_{k_{i}\times k_{i}}, \quad W^*_{i}W_{i+1} =0,\quad
		W^*_{i} W_{i-1}=0.\label{eq:2.4}\end{equation} 
	By \eqref{eq:2.4}, a
	straightforward computation shows that
	\begin{equation} \left\{\begin{array}{l} \pa W_{i} = W_{i+1} \Om_{i} +
			W_{i} \Psi_{i}, \\
			\ov{\pa}W_{i} =-W_{i-1} \Om^*_ {i-1}- W_{i}\Psi^*_{i},
		\end{array}\right.  \label{eq:2.5}\end{equation}
	where $\Omega_{i}$ is a $\left(k_{i+1} \times k_{i}\right)$-matrix,  $\Psi_{i}$ is a $\left(k_{i}\times k_{i}\right)$-matrix for
	$i=0, 1, 2, \ldots, i_0$ and $\Omega_{i_0}=0$. It is very evident that integrability conditions for  \eqref{eq:2.5} are
	$$\overline{\partial}\Omega_i=\Psi_{i+1}^{*}\Omega_i-\Omega_i\Psi_{i}^{*},$$
	$$\overline{\partial}\Psi_i+\partial\Psi_{i}^{*}=\Omega_i^*\Omega_i+\Psi_i^*\Psi_i-\Omega_{i-1}\Omega_{i-1}^{*}-\Psi_i\Psi_i^*.$$

	Now we assume that $\phi_{i}$ is $\partial^{'}$-irreducible, then $|\det \Omega_{i}|^2dz^{k_i}d\overline{z}^{k_i}$ is a well-defined invariant on $S^2$ and has only isolated zeros.

	Set $L_{i} =\textrm{tr} (\Omega_{i} \Omega^*_{i})$, the metric induced by $\phi_{i}$ is
	given in the form  \begin{equation}ds^2 _{i} =
		(L_{i-1}+L_{i})dzd\overline{z}. \label{eq:2.6}\end{equation}
	The  Gauss curvature $K$ and second fundamental form $B$  of $\phi_{i}$
	are given by \begin{equation} \left\{
		\begin{array}{l}
			K = -\frac{2}{L_{i-1}+L_{i}}\partial\overline{\partial} \log {(L_{i-1}+L_{i})},
			\\ \|B\|^2 = 4\textrm{tr}PP^{\ast},
		\end{array}\right.  \label{eq:2.7}\end{equation}
	where $P=\partial\left(\frac{A_z}{\lambda^2}\right), \
	P^{\ast} =- \overline{\partial}
	\left(\frac{A_{\overline{z}}}{\lambda^2}\right)$ with
	$\lambda^2=L_{i-1}+L_{i}$ (cf. \cite{11}).

	In the following, we give a definition of the unramified harmonic
	map as follows.

	\begin{definition}[{\cite{9}}] 
		\emph{If $\det (\Omega _i \Omega^*_i) dz^{k_{i +  1}} d\overline{z}
			^{k_{i+ 1}} \neq 0$ everywhere on $S^2$ in \eqref{eq:2.3} for some
			$i$, we say that $\phi_i: S^2 \rightarrow G(k_i,n; \mathbb{C})$
			is} unramified. \emph{If $\det (\Omega _i \Omega^*_i) dz^{k_{i + 1}}
			d\overline{z} ^{k_{i+ 1}} \neq 0$ everywhere on $S^2$ in
			\eqref{eq:2.1} (resp. \eqref{eq:2.2}) for each $i = 0, 1, 2,
			\ldots$, we say that the harmonic sequence \eqref{eq:2.1} (resp.
			\eqref{eq:2.2}) is} totally unramified. \emph{If \eqref{eq:2.1} and
			\eqref{eq:2.2} are both totally unramified, we say that $\phi$
			is} totally unramified.
	\end{definition}

	Especially, let $\psi:S^2 \rightarrow \mathbb{C}P^n$ be a linearly full
	conformal minimal immersion,  then the following  harmonic sequence in $\mathbb{C}P^n$  is uniquely determined by $\psi$
	\begin{equation}0\stackrel{\partial^\prime}{\longrightarrow}
		\underline{\psi} _0^{(n)}
		\stackrel{\partial^\prime}{\longrightarrow}\cdots
		\stackrel{\partial^\prime}{\longrightarrow} \underline{\psi}
		=\underline{\psi}_i^{(n)} \stackrel{\partial^\prime}{\longrightarrow}
		\cdots \stackrel{\partial^\prime}{\longrightarrow} \underline{\psi}
		_n^{(n)} \stackrel{\partial^\prime}{\longrightarrow} 0
		\label{eq:2.8}\end{equation} for some $i =0, 1, \ldots,n$. In the following we also denote \eqref{eq:2.8} by $\psi _0^{(n)}, ..., \psi _n^{(n)}: S^2 \rightarrow \mathbb{C}P^n$. 
	
	Define a sequence $f_0^{(n)}, \ldots, f_n^{(n)}$ of local sections of
	$\underline{\psi}_0^{(n)}, \ldots , \underline{\psi}_n^{(n)}$ inductively such
	that $f_0^{(n)}$ is a nowhere zero local section of $\underline{\psi}_0^{(n)}$
	(without loss of generality, assume that $\overline{\partial} f_0^{(n)}
	\equiv 0$ ) and $f_{i+1}^{(n)} = \psi^{(n)\perp}_{i}(
	\partial f_{i}^{(n)}) $ for $i=0, \ldots,n-1$. Then we have
	some formulae as follows: \begin{equation}\partial f_i^{(n)} = f_{i+1}^{(n)} +
		\frac{\langle\partial f_i^{(n)},f_i^{(n)}\rangle}{|f_i^{(n)}|^2}f_i^{(n)}, \ i= 0,\ldots, n,
		\label{eq:2.9}\end{equation} 
	\begin{equation} \overline{\partial}
		f_i^{(n)}= - \frac{|f_i^{(n)}|^2}{|f_{i-1}^{(n)}|^2} f_{i-1}^{(n)}, \ i = 1, \ldots, n.
		\label{eq:2.10}\end{equation}

	\section{Degrees of harmonic maps in $G(k,n;\mathbb{C})$}\label{sec3}
	
	In this section we state the definition of degree of a smooth map $\phi$ from a compact Riemann surface $M$ into $G(k,n;\mathbb{C})$ as follows.

	\begin{definition}[{\cite{4}}] 
		\emph{The} degree \emph{of}  $\phi$, \emph{denoted by deg(}$\phi$\emph{) is the degree of the induced map} $\phi^*: H^2(G(k,n;\mathbb{C}),\mathbb{Z})\cong \mathbb{Z} \longrightarrow H^2(M,\mathbb{Z})\cong \mathbb{Z}$ \emph{on second cohomology.}\end{definition}
	
	\begin{definition}[{\cite{6}}] 
		\emph{Let } $\phi:S^2 \rightarrow G(k,n;\mathbb{C})$ \emph{be a harmonic map.} $\phi$ \emph{is called a} pseudo-holomorphic curve \emph{if it is obtained by some holomorphic curve via $\partial^{'}$}   \emph{in}  \eqref{eq:2.3}. \end{definition}

	Now let $\phi$ be a linearly  full pseudo-holomorphic curve in $G(k,n;\mathbb{C})$ with the harmonic sequence
	\begin{equation}0\stackrel{\partial^\prime}{\longrightarrow}
		\underline{\phi} _0
		\stackrel{\partial^\prime}{\longrightarrow}\cdots
		\stackrel{\partial^\prime}{\longrightarrow} \underline{\phi}
		=\underline{\phi}_i\stackrel{\partial^\prime}{\longrightarrow}
		\cdots \stackrel{\partial^\prime}{\longrightarrow} \underline{\phi}
		_n\stackrel{\partial^\prime}{\longrightarrow} 0,
		\label{eq:3.1}\end{equation} 
	let $\phi^{(i)}=\phi_0\oplus\phi_1\oplus\cdot\cdot\cdot\oplus\phi_i$, where  $i=0, 1, 2,...$. Then $\phi^{(i)}$ is holomorphic, and $\partial^{'}\phi^{(i)}=\phi_{i+1}$.

	Let $\Im:G(k,n;\mathbb{C}) \rightarrow \mathbb{C}P^N$ be the Pl\"{u}cker embedding, and let $F^{(i)}$ be a nowhere zero holomorphic section of $\underline{Im}(\Im\circ \phi^{(i)})$,  it follows that
	$$\partial\overline{\partial}\log|F^{(i)}|^2=L_i.$$
	Denote the degree of $\phi^{(i)}$ by $\delta_{i}$. Then  
	
	\begin{equation} \delta_{i}= \frac{1}{2\pi\sqrt{-1}
		}\int_{S^2}\partial\overline{\partial}\log|F^{(i)}|^2d{\overline{z}} \wedge dz = \frac{1}{2\pi\sqrt{-1}
		}\int_{S^2}L_{i}d{\overline{z}} \wedge dz.
		\label{eq:3.2}\end{equation} 
	We state the following results.
	
	\begin{lemma}[\cite{9}] 
		Let $\phi=\phi_{i}:S^2 \rightarrow G(k,n;\mathbb{C})$ be a linearly full pseudo-holomorphic curve in \eqref{eq:3.1}. Then
		\\ (1) $deg(\phi)= \delta_{i}-\delta_{i-1}$;
		\\ (2) Suppose $\phi$ is $\partial^{'}$-irreducible, and $|\det \Omega_{i}|^2dz^{k_{i}} d\overline{z} ^{k_{i}}$ is a
		well-defined invariant and has no zeros on $S^2$, then $\delta_{i-1}-2\delta_{i}+\delta_{i+1}=-2k_i.$
	\end{lemma}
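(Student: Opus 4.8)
The plan is to compute the two quantities $\delta_i$ and $\deg(\phi)$ directly from the integral representation in \eqref{eq:3.2} and the structure equations \eqref{eq:2.5}. For part (1), recall that $\phi = \phi_i$ sits inside the harmonic sequence \eqref{eq:3.1}, and that $\phi^{(i)} = \phi_0 \oplus \cdots \oplus \phi_i$ is holomorphic. The key observation is that the metric-type quantity $L_i = \textrm{tr}(\Omega_i \Omega_i^*)$ controls both the degree $\delta_i$ of the Pl\"ucker image of $\phi^{(i)}$ and, via \eqref{eq:2.6}, the induced metric on $\phi_i$. First I would relate $\deg(\phi)$ to the second cohomology class pulled back under the Pl\"ucker embedding. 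Since $\phi = \phi_i$ itself is represented by $\phi^{(i)}/\phi^{(i-1)}$, its degree as a map into $G(k,n;\mathbb{C})$ should be expressible through the difference of the degrees of the two successive holomorphic bundles $\phi^{(i)}$ and $\phi^{(i-1)}$, giving $\deg(\phi) = \delta_i - \delta_{i-1}$. The cleanest way to see this is to note that the pullback of the K\"ahler form of $G(k,n;\mathbb{C})$ under $\phi_i$ integrates, up to the normalizing constant $\frac{1}{2\pi\sqrt{-1}}$, to $\int_{S^2}(L_i - L_{i-1})\,d\overline{z}\wedge dz$, and then invoke \eqref{eq:3.2} twice.

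**Computing the second difference.**
For part (2), I would start from \eqref{eq:3.2}, which gives
\begin{equation}
\delta_{i-1}-2\delta_i+\delta_{i+1}=\frac{1}{2\pi\sqrt{-1}}\int_{S^2}(L_{i-1}-2L_i+L_{i+1})\,d\overline{z}\wedge dz.
\label{eq:plan1}
\end{equation}
The task then reduces to identifying the integrand $L_{i-1}-2L_i+L_{i+1}$ with a total derivative plus a curvature-type term. The natural tool is the relation $\partial\overline{\partial}\log L_i$ coming from the integrability conditions for \eqref{eq:2.5}. Under the hypothesis that $\phi_i$ is $\partial^{'}$-irreducible and that $|\det\Omega_i|^2 dz^{k_i}d\overline{z}^{k_i}$ has no zeros, the function $\log|\det\Omega_i|^2$ is globally smooth on $S^2$, so $\int_{S^2}\partial\overline{\partial}\log|\det\Omega_i|^2 d\overline{z}\wedge dz = 0$. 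I would combine the structure equation $\overline{\partial}\Omega_i = \Psi_{i+1}^*\Omega_i - \Omega_i\Psi_i^*$ with its conjugate to derive an expression for $\partial\overline{\partial}\log|\det\Omega_i|^2$ in terms of $L_{i-1}, L_i, L_{i+1}$; the expected identity is of the shape $\partial\overline{\partial}\log|\det\Omega_i|^2 = L_{i+1}-2L_i+L_{i-1}$ plus the contribution $2k_i$ arising from the trace of the identity in the $k_i\times k_i$ block. Integrating this over $S^2$ and using vanishing of the left-hand side then yields $\delta_{i-1}-2\delta_i+\delta_{i+1}=-2k_i$.

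**The main obstacle.**
The hard part will be carrying out the $\partial\overline{\partial}\log\det$ computation cleanly in the matrix (higher-rank) setting, rather than the scalar $\mathbb{C}P^n$ case where \eqref{eq:2.9}--\eqref{eq:2.10} make everything explicit. Specifically, one must differentiate $\log\det(\Omega_i\Omega_i^*)$ and correctly track the $\Psi_i$ terms, verifying that the non-invariant pieces involving $\Psi_i,\Psi_{i+1}$ either cancel or combine into the rank term $2k_i$. I would use the Jacobi formula $\partial\log\det M = \textrm{tr}(M^{-1}\partial M)$ applied to $M = \Omega_i\Omega_i^*$, substitute $\partial\Omega_i$ and $\overline{\partial}\Omega_i$ from the structure equations, and then contract; the unramifiedness hypothesis is exactly what guarantees $\Omega_i\Omega_i^*$ is invertible so that this formula applies pointwise everywhere on $S^2$. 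The normalization constants and the identification of $\textrm{tr}(I_{k_i\times k_i}) = k_i$ with the factor $2$ from the real versus complex dimension count must be checked carefully to land the precise coefficient $-2k_i$.
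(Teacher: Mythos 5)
The paper itself offers no proof of this lemma; it is quoted verbatim from reference [9]. Your part (1) is essentially the standard argument and is sound: the degree form of $\phi_i$ is $(L_i-L_{i-1})\,d\overline{z}\wedge dz$ up to the normalization in \eqref{eq:3.2}, which immediately gives $\deg(\phi)=\delta_i-\delta_{i-1}$.

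Part (2), however, contains a genuine conceptual error in the mechanism you propose for producing $-2k_i$. The pointwise identity coming from the integrability conditions of \eqref{eq:2.5} is exactly
$$\partial\overline{\partial}\log|\det\Omega_i|^2 = L_{i-1}-2L_i+L_{i+1},$$
with \emph{no} additional constant term (the paper itself uses precisely this identity at \eqref{eq:5.14}); a dimensionless additive constant like $2k_i$ cannot appear on the right, since each $L_j$ is the coefficient of a $(1,1)$-form. Concretely, $\overline{\partial}\log\det\Omega_i=\mathrm{tr}\,\Psi_{i+1}^*-\mathrm{tr}\,\Psi_i^*$ follows from $\overline{\partial}\Omega_i=\Psi_{i+1}^*\Omega_i-\Omega_i\Psi_i^*$, and taking the trace of the second integrability condition gives $\overline{\partial}\,\mathrm{tr}\,\Psi_i+\partial\,\mathrm{tr}\,\Psi_i^*=L_i-L_{i-1}$; combining these yields the displayed identity cleanly, with all $\Psi$-terms accounted for and no leftover $\mathrm{tr}\,I_{k_i}$. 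The second error is your claim that the left-hand side integrates to zero because $\log|\det\Omega_i|^2$ is globally smooth on $S^2$. It is not a global function: it is the local coefficient of the invariant $|\det\Omega_i|^2dz^{k_i}d\overline{z}^{k_i}$, so under a coordinate change it shifts by $k_i\log|dz/dw|^2$. Since that invariant is nowhere zero, near the point at infinity $|\det\Omega_i|^2\sim C|z|^{-4k_i}$, whence
$$\frac{1}{2\pi\sqrt{-1}}\int_{S^2}\partial\overline{\partial}\log|\det\Omega_i|^2\,d\overline{z}\wedge dz=-2k_i,$$
i.e. $k_i$ times the degree of the canonical bundle of $S^2$. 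This nonvanishing global contribution --- not a trace of the identity block --- is the entire source of $-2k_i$. As written, your plan would yield $\delta_{i-1}-2\delta_i+\delta_{i+1}=0$ together with a divergent term $\int 2k_i\,d\overline{z}\wedge dz$, so it cannot be completed; replacing these two steps as above repairs the argument.
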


	Especially, for the harmonic sequence \eqref{eq:2.8}, let $r(\partial^{'})=$ sum of the indices of the singularities of $\partial^{'}$, which is called the \emph{ramification index} of $\partial^{'}$ by Bolton et al(cf. \cite{2}). Note that if $r(\partial^{'})=0$ in \eqref{eq:2.8} for all $\partial^{'}$, the harmonic sequence \eqref{eq:2.8} is defined \emph{totally unramified} in \cite{2}.

	For the harmonic sequence $\psi _0^{(n)}, ..., \psi _n^{(n)}: S^2 \rightarrow \mathbb{C}P^n$,  let $l_i^{(n)}= \frac{|f^{(n)}_{i+1}|^2}{|f^{(n)}_i|^2}$ and
	$\delta_i^{(n)} = \frac{1}{2\pi\sqrt{-1}}\int_{S^2}l^{(n)}_i d\overline{z}
	\wedge dz$,  $i =0, \ldots, n-1, \ l^{(n)}_{-1} = l^{(n)}_n =0$. It is easy to
	check that they are in accordance with $L_i$ and $\delta_i$
	respectively in the case $k=1$.  Bolton et al showed (\cite{2})
	$$\delta_ i^{(n)} = (i+1)(n-i)+\frac{n-i}{n+1}\sum_{k=0}^{i-1}(k+1)r(\partial_k)+\frac{i+1}{n+1}\sum_{k=i}^{n-1}(n-k)r(\partial_k).$$
	In particular for  a
	totally unramified harmonic sequence $\psi _0^{(n)}, ..., \psi _n^{(n)}: S^2 \rightarrow \mathbb{C}P^n$ (i.e. $\psi_i^{(n)}$ is unramified,
	$i=0,\ldots, n$), Bolton et al proved (cf. \cite{2})
	\begin{equation}\delta_ i^{(n)} = (i+1)(n-i).\label{eq:3.3}\end{equation}

	In the final of this section we state the rigidity theorem of conformal minimal immersions of $S^2$ into $\mathbb{C}P^n$ with constant curvature as follows.
	Consider the \emph{Veronese sequence}
	$$0{\longrightarrow }\underline{V}^{(n)}_0\stackrel{\partial{'}}
	{\longrightarrow}\underline{V}^{(n)}_1\stackrel{\partial{'}}
	{\longrightarrow}\cdots\stackrel{\partial{'}}
	{\longrightarrow}\underline{V}^{(n)}_n\stackrel{\partial{'}}
	{\longrightarrow}0.$$ For each $i=0,\ldots, n$,  $V^{(n)}_i: S^2\rightarrow \mathbb{C}P^n$ is given by $V^{(n)}_i = (v_{i,0}, \ldots, v_{i,n})^{T}$,  where, for
	$z\in S^2$ and $j=0,\ldots, n$,  $$v_{i,j}(z) =
	\frac{i!}{(1 +
		z\overline{z})^i}\sqrt{\binom{n}{j}}z^{j-i}\sum_{k}(-1)^k
	\binom{j}{i-k}\binom{n-j}{k}(z\overline{z})^k.$$
	Here map $\underline{V}^{(n)}_i: S^2\rightarrow \mathbb{C}P^n$ is a conformal minimal immersion with   induced metric $ds^2_i= \frac{n
		+ 2i(n - i)}{(1+z\overline{z})^2}dzd\overline{z}$ and  constant curvature $K_i= \frac{4}{n +
		2i(n-i)}$.

	By Calabi's rigidity theorem, Bolton et al proved the following
	rigidity result (cf.\cite{2}).
	\begin{lemma}[{\cite{2}}]
		Let $\psi : S^2 \rightarrow \mathbb{C}P^n$ be a linearly full
		conformal minimal immersion of constant curvature. Then, up to a
		holomorphic isometry of $\mathbb{C}P^n$, $\psi$ is a member of the Veronese sequence.
	\end{lemma}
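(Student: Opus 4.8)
The statement is the classical rigidity theorem of Bolton--Jensen--Rigoli--Woodward, and the plan is to recover it by combining the structure of the harmonic sequence \eqref{eq:2.8} with Calabi's rigidity theorem for holomorphic curves. First I would place $\psi$ in its harmonic sequence $\psi_0^{(n)},\dots,\psi_n^{(n)}$ and write $\psi=\psi_i^{(n)}$ for some $i$. From \eqref{eq:2.9}--\eqref{eq:2.10} one obtains the Jacobi (affine Toda) relations
\begin{equation*}
\partial\overline{\partial}\log|f_j^{(n)}|^2=l_j^{(n)}-l_{j-1}^{(n)},\qquad \partial\overline{\partial}\log l_j^{(n)}=l_{j+1}^{(n)}-2l_j^{(n)}+l_{j-1}^{(n)},
\end{equation*}
for $j=0,\dots,n$, subject to the boundary conditions $l_{-1}^{(n)}=l_n^{(n)}=0$. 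By \eqref{eq:2.6} the metric induced by $\psi=\psi_i^{(n)}$ is $\lambda^2\,dz\,d\overline{z}$ with $\lambda^2=l_{i-1}^{(n)}+l_i^{(n)}$, and by \eqref{eq:2.7} the constant curvature hypothesis reads $\partial\overline{\partial}\log\lambda^2=-\tfrac{K}{2}\lambda^2$.

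The heart of the proof is to promote this single constant-curvature equation into complete information about the whole sequence. Since $\lambda^2\,dz\,d\overline{z}$ is a smooth metric of constant curvature $K>0$ on $S^2$, after a conformal reparametrization of $z$ (an isometry of $S^2$) I may assume $\lambda^2=\frac{c}{(1+z\overline{z})^2}$ with $c=4/K$. I would then show that, globally on $S^2$,
\begin{equation*}
l_j^{(n)}=\frac{(j+1)(n-j)}{(1+z\overline{z})^2},\qquad j=0,\dots,n .
\end{equation*}
This requires two things: that the sequence is totally unramified, i.e.\ each $l_j^{(n)}$ is zero-free---a zero of $l_{i-1}^{(n)}$ or $l_i^{(n)}$ would produce a delta-function term in the curvature of $\psi$ through \eqref{eq:2.7}, contradicting smoothness of the round metric, while zeros of the remaining $l_j^{(n)}$ are excluded by the degree count---and that the only globally defined positive solution of the Toda system carrying the prescribed $\lambda^2$ and the boundary data $l_{-1}^{(n)}=l_n^{(n)}=0$ is the Veronese one. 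I expect this to be the main obstacle. The natural tools are the maximum principle applied to the functions $\log l_j^{(n)}$ on the compact surface $S^2$, Gauss--Bonnet (which fixes the total area through $K\cdot\mathrm{Area}=4\pi$), and Lemma 3.3 together with \eqref{eq:3.3}, which in the unramified case force the degrees to be $\delta_j^{(n)}=(j+1)(n-j)$; consistency of these integral constraints with the pointwise Toda system should leave only the Veronese profile.

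Granting this, the generating curve $\psi_0^{(n)}$ is a linearly full \emph{holomorphic} immersion whose induced metric is $l_0^{(n)}\,dz\,d\overline{z}=\frac{n}{(1+z\overline{z})^2}\,dz\,d\overline{z}$, that is, it has constant curvature $\frac{4}{n}$, exactly the curvature of the Veronese curve $V_0^{(n)}$. Calabi's rigidity theorem for holomorphic curves then provides a holomorphic isometry of $\mathbb{C}P^n$ taking $\psi_0^{(n)}$ to $V_0^{(n)}$. Finally, since the entire harmonic sequence is generated from $\psi_0^{(n)}$ by the operator $\partial^{'}$, and $\partial^{'}$ is equivariant under the $U(n+1)$-action, the same isometry takes $\psi=\psi_i^{(n)}$, the $i$-th $\partial^{'}$-transform of $\psi_0^{(n)}$, to $V_i^{(n)}$. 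Hence $\psi$ is, up to a holomorphic isometry of $\mathbb{C}P^n$, a member of the Veronese sequence, as claimed.
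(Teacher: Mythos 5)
First, a point of reference: the paper does not prove this statement at all --- it is Lemma 3.4, quoted from Bolton--Jensen--Rigoli--Woodward \cite{2} with only the remark that it rests on Calabi's rigidity theorem. So there is no in-paper proof to compare against; what can be judged is whether your reconstruction of the argument of \cite{2} is complete. Your overall architecture is the right one (force the generating holomorphic curve $\psi_0^{(n)}$ to have constant curvature $\frac{4}{n}$, invoke Calabi, and transport the resulting isometry along the sequence using the $U(n+1)$-equivariance of $\partial'$), and those last two steps are sound.

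The gap is the one you yourself flag: you never actually derive $l_j^{(n)}=(j+1)(n-j)/(1+z\overline{z})^2$ from the single equation $\partial\overline{\partial}\log\lambda^2=-\frac{K}{2}\lambda^2$, and the tools you propose do not close it. In particular, invoking \eqref{eq:3.3} to rule out zeros of the $l_j^{(n)}$ is circular, since \eqref{eq:3.3} is proved in \cite{2} under the hypothesis that the sequence is totally unramified --- exactly what you are trying to establish; and a maximum-principle argument on $\log l_j^{(n)}$ breaks down precisely at the possible zeros of $l_j^{(n)}$. The mechanism that actually works, and is the one used in \cite{2} (it also reappears in the proof of Proposition 5.5 of this paper), is global and algebraic rather than PDE-theoretic: writing $l_{i-1}^{(n)}+l_i^{(n)}=\partial\overline{\partial}\log\bigl(|F_{i-1}|^2|F_i|^2\bigr)$ with $F_j$ the $j$-th osculating curve, the constant-curvature equation says that $\log\bigl(|F_{i-1}|^2|F_i|^2/(1+z\overline{z})^{\delta_{i-1}+\delta_i}\bigr)$ is harmonic and globally defined on $S^2$, hence constant; since $|F_{i-1}|^2$ and $|F_i|^2$ are polynomials in $z,\overline{z}$ and $1+z\overline{z}$ is irreducible, unique factorization forces each of $|F_{i-1}|^2$, $|F_i|^2$ to be a constant times a power of $1+z\overline{z}$; the recursion $l_j^{(n)}=|F_{j+1}|^2|F_{j-1}|^2/|F_j|^4$ then propagates this to every index $j$, yielding total unramification, the values $\delta_j^{(n)}=(j+1)(n-j)$, and the Veronese profile all at once. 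With that step supplied, your appeal to Calabi's theorem and to the equivariance of $\partial'$ completes the proof.
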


	\section{Reducible  harmonic maps of constant curvature}
	
	In the following, we regard harmonic maps from $S^2$ to $G(2,6;\mathbb{R})$ as conformal minimal immersions of 
	$S^2$ in $G(2,6;\mathbb{R})$. Then we analyze harmonic maps of constant curvature from $S^2$ to $G(2,6;\mathbb{R})$ by
	reducible case and irreducible case and divide them into two sections.

	In this section we first  discuss the reducible ones.  Let  $\phi:S^2 \rightarrow G(2,6;\mathbb{R})$ be a linearly full reducible harmonic map with constant curvature, 
	it follows from (\cite{1}, Proposition 2.12),  to finish the characterize of $\phi$, we distinguish two cases:
	\\ (1) $\phi$ is  a real mixed pair with finite isotropy order, whereas
	\\ (2) $\phi$ is (strongly) isotropic.

	We  first briefly discuss the case that $\phi$ has finite isotropy order, suppose $\phi: S^2 \rightarrow G(2,6;\mathbb{R})$ is a linearly full
	reducible harmonic map with constant curvature and finite isotropy order $r$. It follows from (\cite{10}, Proposition 3.2) that $r=1$,  and then $\phi$ can be
	characterized by  harmonic maps from $S^2$ to $\mathbb{C}P^m (m \leq 5)$, in fact,  $$\underline{\phi} =
	\underline{\overline{f}}_0^{(m)} \oplus \underline{f}_0^{(m)},$$ where $f_0^{(m)}: S^2
	\rightarrow \mathbb{C}P^m$ is holomorphic.
	
	By using $\phi$, a harmonic
	sequence is derived as follows
	\begin{equation}0\stackrel{\partial{''}} {\longleftarrow}
		\overline{\underline{f}}_m^{(m)}\stackrel{\partial{''}} {\longleftarrow}
		\cdots\stackrel{\partial{''}} {\longleftarrow}
		\overline{\underline{f}}_1^{(m)}\stackrel{\partial{''}} {\longleftarrow}
		\underline{\phi}\stackrel {\partial{'}}
		{\longrightarrow}\underline{f}_1^{(m)} \stackrel {\partial{'}}
		{\longrightarrow}\cdots\stackrel {\partial{'}}
		{\longrightarrow}\underline{f}_m^{(m)} \stackrel{\partial{'}}
		{\longrightarrow}0,\label{eq:4.1}\end{equation}  
	where $0\stackrel{\partial{'}}
	{\longrightarrow} \underline{f}_0^{(m)}\stackrel{\partial{'}}
	{\longrightarrow}\underline{f}_1^{(m)}\stackrel{\partial{'}}
	{\longrightarrow} \cdots \stackrel {\partial{'}}
	{\longrightarrow}\underline{f}_m ^{(m)}\stackrel{\partial{'}}
	{\longrightarrow}0 $  
	is a linearly full harmonic sequence in
	$\mathbb{C}P^{m} \subset \mathbb{C}P^{5}$ satisfying
	\begin{equation}\left\{  \begin{array}{l} \langle {f}_0^{(m)},   \overline{f}_i^{(m)} \rangle=0   \  (i=0, 1),\\
			\langle {f}_0^{(m)}, \overline{f}_{2}^{(m)} \rangle\neq 0,
		\end{array}\right. \label{eq:4.2}\end{equation}
	and $\ {2}  \leq m\leq {5}$.   
	The induced metric of $\phi$
	is given by 
	\begin{equation}ds^2 = 2l_0^{(m)}dzd\overline{z},
		\label{eq:4.3}\end{equation} 
	where $l_0^{(m)}dzd\overline{z}$ is the
	induced metric of $f_0^{(m)}: S^2 \rightarrow \mathbb{C}P^m$.  Since $\underline{\phi}$ is of constant
	curvature, using \eqref{eq:4.3} we get that the curvature $K$ of
	$\phi$ satisfies $$K=\frac{2}{m}.$$
	
	By Lemma 3.4, up to a
	holomorphic isometry of $\mathbb{C}P^{5}$, ${f}_0^{(m)}$ is a Veronese
	surface. We can then choose a complex coordinate $z$ on
	$\mathbb{C}=S^2\backslash\{pt\}$ so that $ {f}_0^{(m)} = {U}{{V}}^{(m)}_0$, 
	where $U\in U(6)$ and ${V}^{(m)}_0$ has the standard expression
	given in Section 3 (adding zeros to ${V}^{(m)}_0$ such
	that ${V}^{(m)}_0 \in \mathbb{C}^6$).    
	Then \eqref{eq:4.2} becomes
	$$ \left\{
	\begin{array} {l} \langle UV^{(m)}_0,
	\overline{U}\overline{V}^{(m)}_i\rangle = 0 \  (i=0, 1), \\
	\langle UV^{(m)}_0,
	\overline{U}\overline{V}^{(m)}_2\rangle \neq 0,
	\end{array} \right. $$
	which is equivalent to \begin{equation}\left\{ \begin{array} {l}
			\textrm{tr} WV^{(m)}_0 V^{(m)T}_i= 0 \  (i=0, 1), \\\textrm{tr} WV^{(m)}_0
			V^{(m)T}_{2}\neq 0,
		\end{array} \right. \label{eq:4.4}\end{equation}
	where $W = U^{T}U$, it satisfies $W\in U(6)$ and $W^{T} = W$.

	For any integers $n, s$ with $n\geq 3, \ s\geq 0$, let $H^s_n$ denote the set of all holomorphic maps $f: S^2\rightarrow\mathbb{C}P^{n-1}$ satisfying condition 
	$$\left\{  \begin{array}{l} \langle \partial^{(i)}f,   \overline{f}\rangle=0   \  (0 \leq i \leq 2s+1),\\
	\langle \partial^{(2s+2)}f,   \overline{f}\rangle\neq 0.
	\end{array}\right. $$
	This together with \eqref{eq:4.2} implies that
	$f_0^{(m)} \in H^0_{m+1}$. To characterize $\phi$, here we state one of Bahy-El-Dien and Wood's results  as follows:

	\begin{lemma} [Special case of \cite{1}, Proposition 5.7]
		All  holomorphic maps $f_0^{(m)}: S^2\rightarrow\mathbb{C}P^{m}$ 
		satisfying    $f_0^{(m)} \in H^0_{m+1}$ may be constructed by the following three steps: 
		\\ (1) Choose $F_0(z): \mathbb{C}\rightarrow(\mathbb{C}\cup \{\infty\})^{m-1}$ polynomial with $\langle    F_0(z), \overline F_0(z)\rangle\neq 0$;  
		\\ (2) Let $H(z)$ be the unique rational function $\mathbb{C}\rightarrow(\mathbb{C}\cup \{\infty\})^{m-1}$ with $\frac{d H(z)}{dz}=F_0(z)$ for any $z \in \mathbb{C}$ and $H(0)=0$;
		\\ (3) Define $F_1(z): \mathbb{C}\rightarrow \mathbb{C}^{m+1}=\mathbb{C}^{m-1}\oplus\mathbb{C}\oplus\mathbb{C}$ by $F_1(z)=(2H(z), 1-\langle H(z),   \overline{H}(z)\rangle, \sqrt{-1}(1+\langle H(z),   \overline{H}(z)\rangle))$.  Then $F_1(z)$ is a rational function and so represents the holomorphic map $f_0^{(m)}: S^2\rightarrow\mathbb{C}P^{m}$ in homogeneous coordinates.
	\end{lemma}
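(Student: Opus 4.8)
The plan is to prove the two implications packaged in the statement: that the three-step recipe always produces a map lying in $H^0_{m+1}$, and conversely that every $f_0^{(m)}\in H^0_{m+1}$ arises this way. The first (sufficiency) is a direct verification. Abbreviating $v=\langle H,\overline H\rangle=\sum_j H_j^2$ and writing $F_1=(2H,\,1-v,\,\sqrt{-1}(1+v))$, one computes $\langle F_1,\overline{F_1}\rangle=4v+(1-v)^2-(1+v)^2=0$, so $f_0^{(m)}=[F_1]$ does land in the quadric. Differentiating and using $H'=F_0$ gives $\langle\partial F_1,\overline{\partial F_1}\rangle=4\langle F_0,\overline{F_0}\rangle$, which is nonzero precisely because $F_0$ was chosen with $\langle F_0,\overline{F_0}\rangle\neq0$. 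Since $\langle F_1,\overline{F_1}\rangle\equiv0$ forces $\langle\partial^{(i)}F_1,\overline{F_1}\rangle=0$ for $i=0,1$ while the $i=2$ quantity equals, up to sign, $\langle\partial F_1,\overline{\partial F_1}\rangle\neq0$, these are exactly the defining conditions of $H^0_{m+1}$.

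For the converse I would introduce adapted null coordinates on the last two factors of $\mathbb{C}^{m+1}=\mathbb{C}^{m-1}\oplus\mathbb{C}\oplus\mathbb{C}$. Choosing a coprime holomorphic homogeneous representative $\zeta=(\xi,a,b)$ of $f_0^{(m)}$ and setting $u=a+\sqrt{-1}b$, $p=a-\sqrt{-1}b$, the isotropy condition $\langle f_0^{(m)},\overline{f_0^{(m)}}\rangle=0$ becomes $\sum_j\xi_j^2+up=0$. First I would show $p\not\equiv0$: if $p\equiv0$ then $a=\sqrt{-1}b$ throughout, so the image satisfies the single linear relation $a-\sqrt{-1}b=0$ and lies in a proper $\mathbb{C}^m$, contradicting the standing assumption that $f_0^{(m)}$ is linearly full. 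Hence $p$ is a nonzero meromorphic function, and rescaling the representative by $2/p$ gives an equivalent representative $(\tilde\xi,\tilde a,\tilde b)$ whose $p$-component $\tilde a-\sqrt{-1}\tilde b$ is the constant $2$.

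After this normalization the quadric relation forces $\tilde a+\sqrt{-1}\tilde b=-\tfrac12\sum_j\tilde\xi_j^2$, so putting $H=\tilde\xi/2$ one recovers $\tilde a=1-\langle H,\overline H\rangle$ and $\tilde b=\sqrt{-1}(1+\langle H,\overline H\rangle)$; that is, $f_0^{(m)}=[F_1]$ with $F_1$ in exactly the shape of step (3). Setting $F_0=H'$ and normalizing $H(0)=0$ by a constant translation completes the identification, while the required non-degeneracy $\langle F_0,\overline{F_0}\rangle\neq0$ is read off from $\langle\partial F_1,\overline{\partial F_1}\rangle=4\langle F_0,\overline{F_0}\rangle$ together with the hypothesis $\langle\partial^{(2)}f_0^{(m)},\overline{f_0^{(m)}}\rangle\neq0$, i.e. the $s=0$ condition in $H^0_{m+1}$ that excludes higher isotropy order (if instead $\langle F_0,\overline{F_0}\rangle\equiv0$ then $\partial f_0^{(m)}$ would also lie in the quadric, raising the isotropy order).

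The main obstacle I anticipate is controlling the regularity of $H$ after the division by $p$. A priori $H=\tilde\xi/2=\xi/p$ is only rational, with possible poles at the zeros of $p$, and one must check against the relation $\sum_j\xi_j^2=-up$ that $F_0=H'$ is of the asserted type and that the resulting $F_1$ extends across $z=\infty$ to a genuine map of $S^2$ into $\mathbb{C}P^m$. This regularity bookkeeping near the zeros of $p$, rather than the algebraic normal form, is the delicate point, and it is exactly where specializing Bahy-El-Dien and Wood's general construction (\cite{1}, Proposition 5.7) to isotropy order $s=0$ does the real work; I would either invoke that specialization directly or reproduce its local analysis of $\partial$ at the singularities of $p$.
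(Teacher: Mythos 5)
The paper does not actually prove this lemma: it is stated and used as a direct quotation of a special case of Bahy-El-Dien and Wood's Proposition 5.7 in \cite{1}, with no argument supplied. So there is no in-paper proof to compare against, and your proposal should be judged as an attempt to reconstruct the cited result. Your sufficiency direction is correct and complete: with $v=\langle H,\overline H\rangle=\sum_j H_j^2$ one indeed gets $\langle F_1,\overline{F_1}\rangle=4v+(1-v)^2-(1+v)^2=0$ and $\langle\partial F_1,\overline{\partial F_1}\rangle=4\langle F_0,\overline{F_0}\rangle$, and differentiating the identity $\langle F_1,\overline{F_1}\rangle\equiv 0$ gives exactly the $H^0_{m+1}$ conditions. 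This is a genuine addition relative to the paper, which verifies none of this.

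The converse, however, has a concrete gap beyond the regularity issue you flag. Your normalization requires the null functional $p=a-\sqrt{-1}b$ attached to the \emph{given} last two coordinates to be usable, but for a general element of $H^0_{m+1}$ there is no reason the coprime polynomial representative should have $p$ equal to (or reducible to) a nonzero constant in those coordinates: linear fullness only rules out $p\equiv 0$, not $p$ nonconstant, and permuting coordinates of a map produced by the recipe already yields elements of $H^0_{m+1}$ for which your normalization fails as stated. The statement is really ``up to an isometry adapting the splitting $\mathbb{C}^{m+1}=\mathbb{C}^{m-1}\oplus\mathbb{C}\oplus\mathbb{C}$ to $f$,'' which is consistent with how the paper applies it (always modulo a unitary $U$), but your argument never produces that adapted splitting. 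Moreover, after dividing by $p$ the map $H=\xi/p$ is only rational, whereas steps (1)--(2) require $H$ to be the primitive of a \emph{polynomial} $F_0$; showing that the correct normalization yields such an $H$ is precisely the content of \cite{1}, Proposition 5.7, and you resolve it only by invoking that proposition. At the one nontrivial step your proof is therefore circular --- though no more so than the paper itself, which cites the result outright.
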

	
	As to the second fundamental form $B$ of $\phi$,  by \eqref{eq:2.7} and a series of calculations, we obtain
	$$\left\{
	\begin{array} {l}
	\partial\phi=\frac{1}{|f^{(m)}_0|^2}[\overline{f}^{(m)}_0(\overline{f}^{(m)}_{1})^{\ast}+f^{(m)}_{1}f_0^{(m)\ast}],
	\\ A_z=\frac{1}{|f^{(m)}_0|^2}[\overline{f}_0^{(m)}(\overline{f}^{(m)}_{1})^{\ast}-f^{(m)}_{1}f_0^{(m)\ast}],
	\\ P=\frac{1}{2|f^{(m)}_1|^2}[\overline{f}_0^{(m)}(\overline{f}^{(m)}_{2})^{\ast}-f^{(m)}_{2}f_0^{(m)\ast}].
	\end{array} \right.$$ From this we derive the following useful relation
	$$\|B\|^2=2\frac{\delta_1^{(m)}}{\delta_0^{(m)}}-2\frac{  |\langle f_0^{(m)}, \overline f_2^{(m)}\rangle |^2}{|f^{(m)}_{1}|^4}.$$

	Set $$G_W:=\{U \in U(6)|U^TU=W\},$$ 
	in the following we shall characterize $\phi$ explicitly by virtue of Lemma 4.1, and prove the following property.

	\begin{lemma}
		Let $\phi: S^2\rightarrow G(2,6;\mathbb{R})$ be a linearly full
		reducible harmonic map with finite isotropy order $r$ and Gauss curvature  $K$. Suppose that $K$ is constant,  then $r=1$ and, up to an isometry
		of $G(2,6;\mathbb{R})$,  $\phi$ belongs to one of the following cases.
		\\ (1)  $\underline{\phi} = \underline{\overline{U}}\overline{\underline{V}}^{(3)}_0\oplus
		\underline{U}\underline{V}^{(3)}_0$ with $K=\frac{2}{3}$ for some $U\in G_W$, where W has the form \eqref{eq:4.13};
		\\ (2)  $\underline{\phi} = \underline{\overline{U}}\overline{\underline{V}}^{(2)}_0\oplus
		\underline{U}\underline{V}^{(2)}_0$ with $K= 1$  for some $U\in G_W$, where W has the form \eqref{eq:4.15}.
		In each of these two cases, there are many different types of W, thus exist different  $U \in U(6)$ such that $UV^{(m)}_0 (m=2, 3)$ are linearly full in $G(2,6;\mathbb{R})$, and they are not $SO(6)-$ equivalent.
	\end{lemma}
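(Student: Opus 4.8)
The plan is to push through the reduction prepared just before the statement and then solve the resulting finite algebraic problem. First I would record the starting data: because $\phi$ is reducible of finite isotropy order, (\cite{10}, Proposition 3.2) gives $r=1$ and the splitting $\underline{\phi}=\overline{\underline{f}}_0^{(m)}\oplus\underline{f}_0^{(m)}$ with $f_0^{(m)}:S^2\to\mathbb{C}P^m$ holomorphic, subject to \eqref{eq:4.2} and $2\le m\le 5$; the metric \eqref{eq:4.3} together with constancy of $K$ yields $K=2/m$. Since $f_0^{(m)}$ is then a holomorphic immersion of constant curvature, Calabi rigidity (Lemma 3.4) identifies it, up to a holomorphic isometry of $\mathbb{C}P^5$, with a Veronese map $f_0^{(m)}=UV_0^{(m)}$, $U\in U(6)$, and \eqref{eq:4.2} becomes the conditions \eqref{eq:4.4} on the symmetric unitary matrix $W=U^TU$. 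The whole statement thus reduces to the question: for which $m\in\{2,3,4,5\}$ does there exist $W\in U(6)$ with $W^T=W$ satisfying \eqref{eq:4.4} and making $UV_0^{(m)}$ linearly full in $\mathbb{C}^6$, and what are those $W$?

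Second, I would make \eqref{eq:4.4} completely explicit. With the holomorphic representative $V_0^{(m)}=(\dots,\sqrt{\binom{m}{j}}\,z^{j},\dots)^T$, the $i=0$ condition $V_0^{(m)T}WV_0^{(m)}\equiv 0$ is a polynomial identity in $z$, equivalent to the vanishing of every weighted anti-diagonal sum $\sum_{j+k=p}\sqrt{\binom{m}{j}\binom{m}{k}}\,W_{jk}=0$; this is a linear system on the top-left $(m+1)\times(m+1)$ block of $W$. Differentiating $f_0^{(m)T}f_0^{(m)}\equiv 0$ and using $f_1^{(m)}=\pi^{\perp}_{f_0^{(m)}}\partial f_0^{(m)}$ shows that the $i=0$ condition already forces the $i=1$ condition and that $\langle f_0^{(m)},\overline{f}_2^{(m)}\rangle=-\langle f_1^{(m)},\overline{f}_1^{(m)}\rangle$, so the only surviving requirement from \eqref{eq:4.2} is the open condition $V_1^{(m)T}WV_1^{(m)}\not\equiv 0$ --- precisely the quantity appearing in the displayed formula for $\|B\|^2$. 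Equivalently, I could route the computation through Lemma 4.1, writing $f_0^{(m)}$ via the primitive $H$ of a polynomial $F_0$ and reading $m$ off from $\deg H$.

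Third, I would solve this coupled system for each $m$ separately: the linear system fixes the admissible block of $W$, and I would then impose unitarity $W^{*}W=I$, the nonvanishing $V_1^{(m)T}WV_1^{(m)}\not\equiv 0$, and linear fullness (which amounts to $U\mathbb{C}^{m+1}+\overline{U}\mathbb{C}^{m+1}=\mathbb{C}^6$). I expect $m=2$ to yield exactly the family of $W$ in case (2), with $K=1$, and $m=3$ the family in case (1), with $K=\frac{2}{3}$, while $m=4$ and $m=5$ should be eliminated: for $m=5$ the weighted anti-diagonal vanishing over-determines a symmetric unitary (a crude count gives $21-2(2m+1)<0$ at $m=5$), and for $m=4$ I expect the candidates surviving the linear system to fail the $i=2$ nonvanishing (so they would be of higher isotropy order, outside the present finite-order case) or to fail linear fullness. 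Reading off the solution sets then produces the explicit normal forms (4.13) and (4.15) of $W$ together with the two values of $K$.

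Finally, for the non-congruence claim I would observe that $W=U^TU$ is invariant under the ambient action $U\mapsto gU$, $g\in O(6)$ (since $g^Tg=I$), and that isometries of $G(2,6;\mathbb{R})$ act through $O(6)\supset SO(6)$; hence two choices of $U$ producing different $W$ cannot be $SO(6)$-congruent, and it remains only to exhibit, within each family, distinct $W$ that stay different after quotienting by the residual reparametrization (M\"obius) freedom built into the Veronese normalization. The main obstacle is the third step: simultaneously solving the linear (quadric-containment) and quadratic (unitarity) constraints, in particular establishing the clean non-existence for $m=4,5$ and the completeness of the normal forms (4.13), (4.15) for $m=2,3$, while keeping track of the residual symmetry needed to certify non-congruence.
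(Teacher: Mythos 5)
Your reduction is the same as the paper's: $r=1$ and $K=2/m$ with $2\le m\le 5$ from (\cite{10}, Proposition 3.2) and \eqref{eq:4.3}, Calabi rigidity to write $f_0^{(m)}=UV_0^{(m)}$, and the translation of \eqref{eq:4.2} into conditions \eqref{eq:4.4} on the symmetric unitary $W=U^TU$. Your observation that the $i=0$ condition forces the $i=1$ condition (by differentiating $f_0^{(m)T}f_0^{(m)}\equiv 0$) and that $\langle f_0^{(m)},\overline f_2^{(m)}\rangle=-\langle f_1^{(m)},\overline f_1^{(m)}\rangle$ is correct and cleanly packages what the paper does implicitly. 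Your remark that $W$ is an invariant of the $O(6)$-action $U\mapsto gU$ is also the right mechanism behind the non-congruence claim (and is essentially all the paper offers on that point as well).

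The genuine gap is your third step, which is where the entire content of the lemma lives, and you leave it as ``I expect.'' For $m=5$ your dimension count ($21-2(2m+1)<0$) is only a heuristic: the conditions need not be independent or transverse, so a negative virtual dimension does not prove emptiness. The paper's actual argument is an explicit algebraic contradiction: it combines the degree constraints \eqref{eq:4.6} on $\langle H,\overline H\rangle$ coming from Lemma 4.1 (which you do not impose at all --- you only use the quadric-containment condition $V_0^{(m)T}WV_0^{(m)}\equiv 0$), the resulting relations \eqref{eq:4.8}--\eqref{eq:4.12} on the entries of $W$, and unitarity of $W$ to force $W$ into an anti-diagonal form with $|w_{05}|=|w_{14}|=|w_{23}|$, which contradicts $w_{05}+5w_{14}+10w_{23}=0$. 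For $m=4$ your proposed mechanism (failure of the $i=2$ nonvanishing or of linear fullness) is a guess and differs from the paper's, which again derives a contradiction with unitarity. Without carrying out these eliminations and the indeterminate-coefficient computations yielding the normal forms \eqref{eq:4.13} and \eqref{eq:4.15}, the proposal establishes the framework but not the classification.
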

	\begin{proof} 
		According to above discussion, $r=1$ and $2 \leq m \leq 5$, here we deal with the four cases $m= 2, 3, 4, 5$ respectively.

		\textbf{(1) $m=5$.}
		
		Firstly we discuss  this case  and prove $m \neq 5$. To do this,  let us  assume that there exists  a linearly full
		reducible harmonic map $\underline{\phi} =
		\underline{\overline{f}}_0^{(5)} \oplus \underline{f}_0^{(5)}: S^2 \rightarrow G(2,6;\mathbb{R})$ with constant curvature and finite isotropy order,  then,  $f_0^{(5)} \in H^0_{6}$ and it can be obtained by Lemma 4.1.

		In (1) of Lemma 4.1, choose 
		$F_0(z)=\left(\begin{array}{cccccccccc}   a_{00}&  a_{01}&  a_{02} &  a_{03}
		\\a_{10}&  a_{11}&  a_{12} &  a_{13}
		\\ a_{20}&  a_{21}&  a_{22} &  a_{23}
		\\a_{30}&  a_{31}&  a_{32} &  a_{33}
		\end{array}\right)\left(\begin{array}{cccccccccc}   1\\z
		\\ z^2
		\\z^3
		\end{array}\right) \triangleq A \left(\begin{array}{cccccccccc}   1\\z
		\\ z^2
		\\z^3
		\end{array}\right)$,  
		where $A$ is a constant matrix with $\langle    F_0(z), \overline F_0(z)\rangle\neq 0$, then  using (2) of Lemma 4.1 we write  $H(z)$ in the form
		$$\left(\begin{array}{cccccccccc}   a_{00}z+\frac{1}{2}a_{01}z^2+\frac{1}{3}a_{02}z^3+\frac{1}{4}a_{03}z^4
		\\  a_{10}z+\frac{1}{2}a_{11}z^2+\frac{1}{3}a_{12}z^3+\frac{1}{4}a_{13}z^4
		\\  a_{20}z+\frac{1}{2}a_{21}z^2+\frac{1}{3}a_{22}z^3+\frac{1}{4}a_{23}z^4
		\\  a_{30}z+\frac{1}{2}a_{31}z^2+\frac{1}{3}a_{32}z^3+\frac{1}{4}a_{33}z^4
		\end{array}\right),$$ which gives
		\begin{equation}
			\langle H(z),   \overline{H}(z)\rangle = \sum_{i=0}^3(a_{i0}z+\frac{1}{2}a_{i1}z^2+\frac{1}{3}a_{i2}z^3+\frac{1}{4}a_{i3}z^4)^2. \label{eq:4.5}\end{equation}
		This relation together with (3) of Lemma 4.1 and the fact that $F_1(z)$  represents a holomorphic map of $S^2$ in $\mathbb{C}P^{5}$ show  that  coefficients of $z^6, \ z^7$ and $z^8$ in \eqref{eq:4.5} are all vanish, which can be expressed by
		\begin{equation}
			\frac{1}{9}\sum_{i=0}^{3}(a_{i2})^2+\frac{1}{4}\sum_{i=0}^{3}a_{i1}a_{i3}=0, \quad \sum_{i=0}^{3}a_{i2}a_{i3}=0, \quad  \sum_{i=0}^{3}(a_{i3})^2=0, \label{eq:4.6}\end{equation}
		and
		it is reasonable to put
		$$
		\langle H(z),   \overline{H}(z)\rangle = A_2\sqrt{10}z^2+A_3\sqrt{10}z^3+A_4\sqrt{5}z^4+A_5z^5$$
		for convenience, where $A_2, \ A_3, \ A_4$ and $A_5$ are constant, then it can be clearly seen that 
		$$F_1(z)=\left(\begin{array}{cccccccccc}   
		2a_{00}z+a_{01}z^2+\frac{2}{3}a_{02}z^3+\frac{1}{2}a_{03}z^4
		\\ 2a_{10}z+a_{11}z^2+\frac{2}{3}a_{12}z^3+\frac{1}{2}a_{13}z^4
		\\ 2a_{20}z+a_{21}z^2+\frac{2}{3}a_{22}z^3+\frac{1}{2}a_{23}z^4
		\\ 2a_{30}z+a_{31}z^2+\frac{2}{3}a_{32}z^3+\frac{1}{2}a_{33}z^4
		\\ 1-A_2\sqrt{10}z^2-A_3\sqrt{10}z^3-A_4\sqrt{5}z^4-A_5z^5
		\\ \sqrt{-1}(1+A_2\sqrt{10}z^2+A_3\sqrt{10}z^3+A_4\sqrt{5}z^4+A_5z^5)
		\end{array}\right)=f^{(5)}_0=UV^{(5)}_0$$
		with 
		\begin{equation}U=\left(\begin{array}{cccccccccc}   
				0&\frac{2a_{00}}{\sqrt{5}}& \frac{a_{01}}{\sqrt{10}} &\frac{2a_{02}}{3\sqrt{10}}  & \frac{a_{03}}{2\sqrt{5}}  & 0
				\\ 0&\frac{2a_{10}}{\sqrt{5}}& \frac{a_{11}}{\sqrt{10}} &\frac{2a_{12}}{3\sqrt{10}}  & \frac{a_{13}}{2\sqrt{5}} & 0
				\\ 0&\frac{2a_{20}}{\sqrt{5}}& \frac{a_{21}}{\sqrt{10}} &\frac{2a_{22}}{3\sqrt{10}}  & \frac{a_{23}}{2\sqrt{5}}& 0
				\\ 0&\frac{2a_{30}}{\sqrt{5}}& \frac{a_{31}}{\sqrt{10}} &\frac{2a_{32}}{3\sqrt{10}}  & \frac{a_{33}}{2\sqrt{5}}& 0
				\\ 1&0& -A_2 &-A_3 &-A_4& -A_5
				\\ \sqrt{-1}&0& \sqrt{-1}A_2 &\sqrt{-1}A_3 &\sqrt{-1}A_4& \sqrt{-1}A_5
			\end{array}\right).\label{eq:4.7}\end{equation}
		Here it is importance to notice that this  $U$ satisfies  $UU^*=\mu I_{6\times 6}$ for some constant $\mu$  from our assumption that $\phi$ is  of constant curvature, which means that the $F_1(z)$  constructed above is of constant curvature.

		Set $W=U^TU \triangleq  \left(\begin{array}{cccccccccc}   
		w_{00}&w_{01}& w_{02}&w_{03}& w_{04}&w_{05}
		\\ w_{10}&w_{11}& w_{12}&w_{13}& w_{14}&w_{15}
		\\ w_{20}&w_{21}& w_{22}&w_{23}& w_{24}&w_{25}
		\\ w_{30}&w_{31}& w_{32}&w_{33}& w_{34}&w_{35}
		\\ w_{40}&w_{41}& w_{42}&w_{43}& w_{44}&w_{45}
		\\ w_{50}&w_{51}& w_{52}&w_{53}& w_{54}&w_{55}
		\end{array}\right)$.  By the standard expression of $V^{(5)}_0$ given in Section 3, we get $V^{(5)}_0 V_0^{(5)T}$ is a polynomial matrix in $z$ and $\overline{z}$. Using the method of indeterminate coefficients,  \eqref{eq:4.6} gives the relation 
		\begin{equation} w_{34}=w_{44}=0 , \quad  w_{33}+\sqrt{2}w_{24}=0,\label{eq:4.8}\end{equation}
		and by \eqref{eq:4.7} we conclude 
		$w_{ij}=w_{ji}$ for any $0\leqslant i, j \leqslant 5$.

		Furthermore using \eqref{eq:4.2} we have the relation $$\langle F_1(z), \overline F_1(z)\rangle = \textrm{tr} WV^{(5)}_0 V^{(5)T}_0= 0.$$
		A series calculations give
		\begin{equation} w_{00}=w_{01}=w_{45}=w_{55}= 0, \quad  2\sqrt{10}w_{02}+5w_{11}= 0, \label{eq:4.9}\end{equation}
		\begin{equation}    w_{03}+\sqrt{5}w_{12}= 0, \quad  2\sqrt{10}w_{35}+5w_{44}= 0,  \quad w_{25}+\sqrt{5}w_{34}= 0, \label{eq4.10}\end{equation}
		\begin{equation}  w_{04}+\sqrt{10}w_{13}+\sqrt{5}w_{22}= 0,  \quad w_{15}+\sqrt{10}w_{24}+\sqrt{5}w_{33}= 0,  \label{eq:4.11}\end{equation}
		\begin{equation}  w_{05}+5w_{14}+10w_{23}= 0.\label{eq:4.12}\end{equation}
		
		Combing \eqref{eq:4.8}-\eqref{eq:4.11} and using the property of the unitary matrix, this is a straightforward computation
		$$U^TU \triangleq  \left(\begin{array}{cccccccccc}   
		0&0& 0 &0& 0 & w_{05}
		\\ 0&0& 0 &0& w_{14} & 0
		\\ 0&0& 0 &w_{23}& 0 & 0
		\\ 0&0& w_{32} &0& 0 & 0
		\\ 0&w_{41}& 0 &0& 0 & 0
		\\ w_{50}&0& 0&0&0&0
		\end{array}\right)$$ with $|w_{05}|=|w_{14}|=|w_{23}|$,  which contradicts \eqref{eq:4.12}. So  $m \neq 5$ is proved.

		\textbf{(2) $m=4$.}
		
		Analogous $m \neq 4$ can be proved by using the same method as above.

		\textbf{(3) $m=3$.}
		
		From the fact that 
		$$V_0^{(3)}=(1, \sqrt{3}z, \sqrt{3}z^2, z^3)^T,$$
		and then using the method of indeterminate coefficients,  \eqref{eq:4.4} gives 
		\begin{equation}W=U^TU \triangleq  \left(\begin{array}{cccccccccc}   
				0&0& w_{02}&w_{03}& w_{04}&w_{05}
				\\ 0&-\frac{2\sqrt{3}}{3}w_{02}& -\frac{1}{3}w_{03}&w_{13}& w_{14}&w_{15}
				\\ w_{02}&-\frac{1}{3}w_{03}& -\frac{2\sqrt{3}}{3}w_{13}&0& w_{24}&w_{25}
				\\ w_{03}&w_{13}& 0&0& w_{34}&w_{35}
				\\ w_{04}&w_{14}& w_{24}&w_{34}& w_{44}&w_{45}
				\\ w_{05}&w_{15}& w_{25}&w_{35}& w_{45}&w_{55}
			\end{array}\right).\label{eq:4.13}\end{equation}
		There are many different such type of $W$, thus  with different $U$. In other words, we can find different $U$ to write $\underline{\phi} = \underline{\overline{U}}\overline{\underline{V}}^{(3)}_0\oplus
		\underline{U}\underline{V}^{(3)}_0$, and they are not congruent. Here we just give one example of them. Choose  
		$$W=\left(\begin{array}{cccccccccc}
		0 & 0 & 0 &1 & 0 & 0 \\
		0 & 0 & -\frac{1}{3} &0 & \frac{\sqrt{8}}{3}  & 0 \\
		0 & -\frac{1}{3}  & 0 &0 & 0 & \frac{\sqrt{8}}{3}\\
		1 & 0 & 0 &0 & 0 & 0 \\
		0 & \frac{\sqrt{8}}{3} & 0 &0 & 0 & \frac{1}{3} \\
		0 & 0 & \frac{\sqrt{8}}{3}  &0 & \frac{1}{3} & 0 
		\end{array}\right)$$ and  $$U
		=\left(\begin{array}{cccccccccc}
		\frac{1}{\sqrt{2}} & 0 & 0 &\frac{1}{\sqrt{2}}  & 0 & 0 \\
		\frac{\sqrt{-1}}{\sqrt{2}} & 0 & 0 &- \frac{\sqrt{-1}}{\sqrt{2}} & 0 & 0 \\
		0 &  \frac{1}{\sqrt{2}}   & -\frac{1}{3\sqrt{2}} &0& \frac{2}{3}  & 0\\
		0 &  \frac{\sqrt{-1}}{\sqrt{2}}   & \frac{\sqrt{-1}}{3\sqrt{2}} &0& -\frac{2\sqrt{-1}}{3}  & 0 \\
		0 & 0 & \frac{2}{3} &0 & \frac{1}{3\sqrt{2}} & \frac{1}{\sqrt{2}} \\
		0 & 0 & \frac{2\sqrt{-1}}{3} &0 & \frac{\sqrt{-1}}{3\sqrt{2}} & \frac{-\sqrt{-1}}{\sqrt{2}} 
		\end{array}
		\right).$$
		In this case  $\underline{\phi} = \underline{\overline{U}}\overline{\underline{V}}^{(3)}_0\oplus
		\underline{U}\underline{V}^{(3)}_0 = \overline{\underline{f}}^{(3)}_0\oplus
		\underline{f}^{(3)}_0$ has Gauss curvature $K=\frac{2}{3}$, where 
		\begin{equation}f^{(3)}_0=[(1+z^3, \  \sqrt{-1}(1-z^3),   \  \sqrt{3}z-\frac{z^2}{\sqrt{3}},   \  \sqrt{-1}(\sqrt{3}z+\frac{z^2}{\sqrt{3}}),   \ 
			\frac{\sqrt{8}}{\sqrt{3}}z^2,  \ 
			\frac{\sqrt{-8}}{\sqrt{3}}z^2)^T].\label{eq:4.14}\end{equation}
		Direct computations give
		$$\|B\|^2=\frac{8}{3}-\frac{32z\overline z}{9(1+z\overline z)^2}.$$

		\textbf{(4) $m=2$.}   
		
		Analogous,  by using   $V_0^{(2)}=(1, \sqrt{2}z, z^2)^T$,  we get the type of  $W=U^TU \in U(6)$ as follows
		\begin{equation}W=U^TU \triangleq  \left(\begin{array}{cccccccccc}   
				0&0& w_{02}&w_{03}& w_{04}&w_{05}
				\\ 0&-w_{02}& 0&w_{13}& w_{14}&w_{15}
				\\ w_{02}&0& 0&w_{23}& w_{24}&w_{25}
				\\ w_{03}&w_{13}& w_{23}&w_{33}& w_{34}&w_{35}
				\\ w_{04}&w_{14}& w_{24}&w_{34}& w_{44}&w_{45}
				\\ w_{05}&w_{15}& w_{25}&w_{35}& w_{45}&w_{55}
			\end{array}\right) \label{eq:4.15}\end{equation} with $w_{02} \neq 0$.   An example can be given by choosing
		
		$$W=\left(\begin{array}{cccccccccc}
		0 & 0 & \frac{1}{2}  &0 & 0 & -\frac{\sqrt{3}}{2}  \\
		0 & -\frac{1}{2}  & 0 &0 & -\frac{\sqrt{3}}{2}  & 0 \\
		\frac{1}{2}  & 0  & 0 &-\frac{\sqrt{3}}{2}  & 0 & 0\\
		0 & 0 & -\frac{\sqrt{3}}{2}  &0 & 0 & -\frac{1}{2}  \\
		0 & -\frac{\sqrt{3}}{2}  & 0 &0 & \frac{1}{2}  & 0 \\
		-\frac{\sqrt{3}}{2}  & 0 & 0  &-\frac{1}{2}  & 0 & 0 
		\end{array}\right), $$  
		and

		$$ U
		=\left(\begin{array}{cccccccccc}
		\frac{1}{\sqrt{2}} & 0 & \frac{1}{2\sqrt{2}}  &0  & 0 & -\frac{\sqrt{3}}{2\sqrt{2}} \\
		\frac{\sqrt{-1}}{\sqrt{2}} & 0 & - \frac{\sqrt{-1}}{2\sqrt{2}}  &0& 0 & \frac{\sqrt{-3}}{2\sqrt{2}}  \\
		0 &  0   & -\frac{\sqrt{-3}}{2\sqrt{2}}  &-\frac{\sqrt{-1}}{\sqrt{2}} &0  & -\frac{\sqrt{-1}}{2\sqrt{2}} \\
		0 &  0  & -\frac{\sqrt{3}}{2\sqrt{2}}  &\frac{1}{\sqrt{2}} &0  & -\frac{1}{2\sqrt{2}}  \\
		0 & \frac{1}{2} & 0&0 & -\frac{\sqrt{3}}{2} & 0\\
		0 & -\frac{\sqrt{-3}}{2} & 0 &0 & -\frac{\sqrt{-1}}{2} & 0
		\end{array}\right]. $$
		In this case  $\underline{\phi} = \underline{\overline{U}}\overline{\underline{V}}^{(2)}_0\oplus
		\underline{U}\underline{V}^{(2)}_0 = \overline{\underline{f}}^{(2)}_0\oplus
		\underline{f}^{(2)}_0$ has Gauss curvature $K=1$, where 
		\begin{equation}f^{(2)}_0= UV^{(2)}_0=[(1+\frac{z^2}{2}, \  \sqrt{-1}(1-\frac{z^2}{2}),   \  -\frac{\sqrt{-3}}{2}z^2,   \  -\frac{\sqrt{3}}{2}z^2,   \ 
			z,  \ 
			-\sqrt{-3}z)^T).\label{eq:4.16}\end{equation}
		As to the second fundamental form $B$ of $\phi$,  by an straightforward  computation, we obtain $$\|B\|^2=\frac{3}{2}.$$ 
		In summary we get the conclusion.
	\end{proof}

	Let
\begin{equation}
W=
\left(
\begin{array}{ccc|ccc}
&  &  & w_{03} & w_{04} & w_{05} \\
& \text{{\huge{0}}} &  & w_{13} & w_{14} & w_{15} \\
&  &  & w_{23} & w_{24} & w_{25} \\
w_{03} & w_{13} & w_{23} & w_{33} & w_{34} & w_{35}
\\ w_{04} & w_{14} & w_{24} & w_{34} & w_{44} & w_{45}
\\w_{05} & w_{15} & w_{25} & w_{35} & w_{45} & w_{55}
\end{array}
\right),\label{eq:4.17}\end{equation}	
	for general linearly full reducible harmonic map with constant curvature from $S^2$ to $G(2,6;\mathbb{R})$,  by Lemma 4.2 and (\cite{10}, Proposition 3.5) we have
	\begin{prop}
		Let $\phi: S^2\rightarrow G(2,6;\mathbb{R})$ be a linearly full
		reducible harmonic map with Gauss curvature  $K$. Suppose that $K$ is constant,  then, up to an isometry
		of $G(2,6;\mathbb{R})$,  $\phi$ belongs to one of the following cases.
		\\ (1)  $\underline{\phi} = \underline{\overline{U}}\overline{\underline{V}}^{(3)}_0\oplus
		\underline{U}\underline{V}^{(3)}_0$ with $K=\frac{2}{3}$ for some $U\in G_W$, where W has the form \eqref{eq:4.13};
		\\ (2)  $\underline{\phi} = \underline{\overline{U}}\overline{\underline{V}}^{(2)}_0\oplus
		\underline{U}\underline{V}^{(2)}_0$ with $K= 1$  for some $U\in G_W$, where W has the form \eqref{eq:4.15};
		\\ (3)  $\underline{\phi} = \underline{\overline{U}}\overline{\underline{V}}^{(2)}_0\oplus
		\underline{U}\underline{V}^{(2)}_0$ with $K=1$ for some $U\in G_W$, where W has the form \eqref{eq:4.17};
		\\ (4)  $\underline{\phi} = \underline{U}\underline{V}^{(4)}_2\oplus \underline{c}_0$ with $K=\frac{1}{3}$  for some $U\in U(5)$ and ${c}_0 = (0, 0, 0, 0, 0, 1)^{T}$.
		\\ In (1) (2) and (3), there are many different types of W, thus exist different  $U \in U(6)$ such that $UV^{(m)}_0 (m=2, 3)$ are linearly full in $G(2,6;\mathbb{R})$, and they are not $SO(6)-$ equivalent.
	\end{prop}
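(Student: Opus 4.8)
The plan is to split along the structural dichotomy for reducible harmonic maps and then invoke the two classification results already in hand. By (\cite{1}, Proposition 2.12), a linearly full reducible harmonic map $\phi: S^2\rightarrow G(2,6;\mathbb{R})$ is either a real mixed pair with finite isotropy order or (strongly) isotropic, and I would treat these two branches separately.

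The finite isotropy order branch is already completely settled. Under the assumption that $K$ is constant, Lemma 4.2 forces the isotropy order to be $r=1$ and shows that, up to an isometry of $G(2,6;\mathbb{R})$, $\phi$ is either $\underline{\overline{U}}\overline{\underline{V}}^{(3)}_0\oplus\underline{U}\underline{V}^{(3)}_0$ with $K=\frac{2}{3}$ and $W$ of the form \eqref{eq:4.13}, or $\underline{\overline{U}}\overline{\underline{V}}^{(2)}_0\oplus\underline{U}\underline{V}^{(2)}_0$ with $K=1$ and $W$ of the form \eqref{eq:4.15}. These are precisely cases (1) and (2), and the accompanying non-congruence assertion is part of Lemma 4.2, so this branch requires no new argument.

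For the strongly isotropic branch I would appeal to (\cite{10}, Proposition 3.5), which furnishes the explicit structure of strongly isotropic reducible harmonic maps into $G(2,6;\mathbb{R})$ as direct sums built from totally isotropic holomorphic data in the ambient $\mathbb{C}P^5$. Imposing that $K$ be constant and normalizing the underlying $\mathbb{C}P^m$ factors via the Veronese rigidity of Lemma 3.4, I expect the induced metric \eqref{eq:2.6} and the curvature formula \eqref{eq:2.7} to leave exactly two admissible configurations: the mixed-pair shape $\underline{\overline{U}}\overline{\underline{V}}^{(2)}_0\oplus\underline{U}\underline{V}^{(2)}_0$ with $W$ now of the more general form \eqref{eq:4.17} and $K=1$ (case (3)), and the split form $\underline{U}\underline{V}^{(4)}_2\oplus\underline{c}_0$ with the fixed line $c_0=(0,0,0,0,0,1)^{T}$ and $K=\frac{1}{3}$ (case (4)). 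The value $K=\frac{1}{3}$ in case (4) reads off from the Veronese data $K_2=\frac{4}{4+2\cdot 2\cdot 2}$ for $\underline{V}^{(4)}_2$, the constant summand contributing nothing to the metric.

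The main obstacle will be this strongly isotropic branch: one must combine the structural output of (\cite{10}, Proposition 3.5) with the constant curvature hypothesis precisely enough to rule out every other isotropic configuration, to confirm the exact algebraic shape \eqref{eq:4.17} of $W$ in case (3), and to verify that distinct admissible $W$ yield non-$SO(6)$-equivalent immersions. The remaining metric and curvature computations are routine, proceeding exactly as in the proof of Lemma 4.2.
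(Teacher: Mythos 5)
Your proposal matches the paper's own treatment: the paper derives Proposition 4.3 simply by combining Lemma 4.2 (the finite isotropy order branch) with the cited result (\cite{10}, Proposition 3.5) for the strongly isotropic branch, exactly the dichotomy and the two ingredients you use. The extra verification you flag as "the main obstacle" in the isotropic branch is not carried out in the paper either — it is delegated entirely to the cited proposition — so your argument is, if anything, more candid about what remains to be checked.
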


	In Proposition 4.3,  (1)   gives us a non-homogeneous constant curved minimal two-sphere in $Q_4$. (2) and (3) stand for two different holomorphic curves from the Riemann sphere into  $Q_4$  whose curvature are both equal to $1$, which illustrates conformal minimal two-spheres of constant curvature in complex hyperquadric $Q_{n}$ are in general not  equivalent, 
	constracting to the fact that generic isometric complex submanifolds in a Kaehler manifold are congruent.  They show us that the case of  $Q_{n}$ is very complicated, and it is very difficult for classifications of conformal minimal two-spheres of constant curvature in a complex hyperquadric $Q_{n}$.

	Here the type of $U$ in (3) of  Proposition 4.3  may be chosen as
	\begin{equation}
		U=U_0=\left(
		\begin{array}{ccc|c}
				\frac{1}{\sqrt2} & 0 & 0 &  \\
				\frac{\sqrt{-1}}{\sqrt2} & 0 & 0 & \\
				0 & \frac{1}{\sqrt2} & 0 &  \\
				0 & \frac{\sqrt{-1}}{\sqrt2} & 0 & \text{{\huge{*}}}\\
				0 & 0 & \frac{1}{\sqrt2} & \\
				0 & 0 & \frac{\sqrt{-1}}{\sqrt2} &
			\end{array}
			\right), \label{eq:4.18}
	\end{equation} 
	and 
	$U$ in (4) of  Proposition 4.3  may be chosen as
	\begin{equation}U= U_1
		=\left[\begin{array}{cccccccccc} \frac{1}{\sqrt{2}} & 0 &
			0 & 0 & \frac{1}{\sqrt{2}} \\
			\frac{\sqrt{-1}}{\sqrt{2}} & 0 & 0 &  0 &
			-\frac{\sqrt{-1}}{\sqrt{2}}
			\\ 0 & \frac{1}{\sqrt{2}} & 0 & -\frac{1}{\sqrt{2}} & 0\\0 &
			\frac{\sqrt{-1}}{\sqrt{2}}& 0 & \frac{\sqrt{-1}}{\sqrt{2}} & 0 \\  0 & 0 &
			1 & 0 & 0
		\end{array}\right]. \label{eq:4.19}\end{equation} 
	Then, up to an isometry
	of $G(2,6;\mathbb{R})$,  either $$\underline{\phi} = \underline{\overline{U}}_0\overline{\underline{V}}^{(2)}_0\oplus
	\underline{U}_0\underline{V}^{(2)}_0=\underline{\overline{f}}^{(2)}_0 \oplus \underline{f}^{(2)}_0$$ with \begin{equation}f^{(2)}_0 = [(1, \sqrt{-1}, \sqrt{2}z, \sqrt{-2}z, z^2, \sqrt{-1}z^2)^{T}]; \label{eq:4.20}\end{equation} or
	$$\underline{\phi} = \underline{U}_1\underline{V}^{(4)}_2\oplus
	\underline{c}_0=\underline{f}^{(4)}_2 \oplus \underline{c}_0$$ with $f^{(4)}_2$ of the follwing expression
	\begin{equation}[(z^2+\overline{z}^2, \sqrt{-1}(\overline{z}^2-z^2),  (z+\overline{z})(|z|^2-1),   \sqrt{-1}(\overline{z}-z)(|z|^2-1),  \frac{1-4|z|^2+|z|^4}{\sqrt{3}})^{T}]. \label{eq:4.21}\end{equation}
	By Theorem 1.1 of \cite{11}, these two maps shown in  \eqref{eq:4.20} and \eqref{eq:4.21}  are all of parallel second fundamental form.

	\section{Irreducible harmonic maps of constant curvature}

	In this section we shall discuss irreducible harmonic map $\phi:
	S^2\rightarrow G(2,6;\mathbb{R})$ of isotropy order $r$.  If $\phi$ has finite isotropy order, then $r=1$ by (\cite{1}, Proposition
	2.8 and Lemma 2.15), and(\cite{10}, Proposition 4.2) implies that
	
	\begin{prop}
		The map   $\phi: S^2\rightarrow G(2,6;\mathbb{R})$  is a linearly
		full irreducible harmonic map with finite isotropy order  if and only if $\underline{\phi}=
		\underline{\overline{V}} \oplus \underline{V}$ with $V = f_1^{(m)} +
		x_0\overline{f}_0^{(m)}$, where $f_0^{(m)}$ is a holomorphic map satisfying  $ \left\{  \begin{array}{l} \langle \overline{f}_0^{(m)},   f_{3}^{(m)} \rangle=0\\
		\langle \overline{f}_0^{(m)}, f_{4}^{(m)} \rangle\neq 0
		\end{array}\right.$,
		and the corresponding coefficient $x_0$ satisfies  equation
		$\partial \overline{x}_0 + \overline{x}_0\partial \log |f_0^{(m)}|^2
		=0, \ m=4 $ or $5$, here $\underline{f}_0^{(m)},...,\underline{f}_m^{(m)}: S^2\rightarrow \mathbb{C}P^{m}$ is a linearly full harmonic sequence in $\mathbb{C}P^{m} \subset  \mathbb{C}P^{5}$.
	\end{prop}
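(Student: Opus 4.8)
The plan is to prove the structure theorem for linearly full irreducible harmonic maps of finite isotropy order by appealing to the classification machinery of Bahy-El-Dien and Wood, combined with the real structure forced by the target $G(2,6;\mathbb{R})$. First I would recall from (\cite{1}, Proposition 2.8 and Lemma 2.15) that any linearly full irreducible harmonic map into a real Grassmannian with finite isotropy order must have isotropy order exactly $r=1$; this pins down the relevant case. Next I would invoke (\cite{10}, Proposition 4.2), which gives the general form $\underline{\phi} = \underline{\overline{V}} \oplus \underline{V}$ for a reducible-to-irreducible passage, where $V$ is built as a \emph{mixed pair} $V = f_1^{(m)} + x_0\overline{f}_0^{(m)}$ from an underlying holomorphic sequence in $\mathbb{C}P^m$. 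The point is that the real conjugation $\sigma$ fixing $G(2,6;\mathbb{R})$ forces $\underline{\phi}$ to be invariant under complex conjugation, so the two summands must be conjugate, which is exactly the structure $\underline{\overline{V}}\oplus\underline{V}$.

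The key computational steps are then twofold. First I would verify the isotropy conditions on the underlying holomorphic datum: the requirement that $\phi$ has isotropy order exactly $1$ (and no higher) translates, via the Frenet relations \eqref{eq:2.9}–\eqref{eq:2.10} and the orthogonality relations \eqref{eq:2.4}, into the precise pair of conditions $\langle \overline{f}_0^{(m)}, f_3^{(m)}\rangle = 0$ and $\langle \overline{f}_0^{(m)}, f_4^{(m)}\rangle \neq 0$. The vanishing of the lower-order inner products is automatic from the mixed-pair construction, while the first nonvanishing at level $4$ is what distinguishes this from the reducible case treated in Section 4 (where nonvanishing occurred already at level $2$, cf.\ \eqref{eq:4.2}). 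Second I would derive the differential equation governing the coefficient $x_0$: substituting $V = f_1^{(m)} + x_0\overline{f}_0^{(m)}$ into the harmonicity equation $\overline{\partial}A_z = [A_z, A_{\overline{z}}]$, and using \eqref{eq:2.9}–\eqref{eq:2.10} to expand $\partial V$ and $\overline{\partial} V$, the harmonic-bundle condition collapses to $\partial\overline{x}_0 + \overline{x}_0\,\partial\log|f_0^{(m)}|^2 = 0$.

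Finally I would establish the dimension constraint $m = 4$ or $5$. Since $\underline{\phi}$ takes values in $G(2,6;\mathbb{C})$ and is linearly full, the span of the relevant sections $f_0^{(m)}, \ldots, f_m^{(m)}$ together with their conjugates must fit inside $\mathbb{C}^6$; combined with the isotropy condition forcing nonvanishing only at level $4$ (so that $f_0^{(m)}, \ldots, f_4^{(m)}$ are all genuinely present and the sequence cannot degenerate below length $5$), this rules out $m \leq 3$, while linear fullness in $\mathbb{C}^6$ caps $m$ at $5$. The converse direction — that any datum satisfying these conditions yields a genuine linearly full irreducible harmonic map of isotropy order $1$ — follows by reversing the construction and checking harmonicity directly from the $x_0$-equation. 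The main obstacle I anticipate is the careful bookkeeping in translating the abstract isotropy order condition into the explicit inner-product conditions at precisely levels $3$ and $4$: one must track how the real structure interacts with the harmonic sequence \eqref{eq:2.1}–\eqref{eq:2.2} and confirm that the mixed-pair ansatz is both necessary and sufficient, rather than merely compatible.
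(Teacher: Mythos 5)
Your proposal follows the paper exactly: the paper offers no independent proof of this proposition, deducing $r=1$ from (\cite{1}, Proposition 2.8 and Lemma 2.15) and the entire structural statement from (\cite{10}, Proposition 4.2) --- precisely the two results you invoke. Your supplementary sketch of how those cited results would be verified (translating finite isotropy order into the inner-product conditions at levels $3$ and $4$, deriving the $x_0$-equation from harmonicity, and bounding $4\le m\le 5$ by fullness) is consistent with the paper's analogous computations in the reducible case, though note the minor terminological slip that $V=f_1^{(m)}+x_0\overline{f}_0^{(m)}$ is not itself a mixed pair in the Burstall--Wood sense.
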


	Furthermore, if  $\phi$ is of constant curvature, by (\cite{10}, Proposition 4.3), since $n=6$ is even, there doesn't exist linearly full totally unramified irreducible conformal minimal immersion of $S^2$ in $G(2,6;\mathbb{R})$ with constant curvature and finite isotropy order. In the following, we only consider the (strongly) isotropic ones. To characterize such $\phi$ we first state one of  Burstall and Wood'
	results (\cite{4}, Theorem 2.4 and Proposition 3.7, 3.8) as follows:
	
	\begin{lemma}[Special case of {\cite{4}}]
		Let $\underline{\phi}:S^2\rightarrow G(2,n;\mathbb{C})$ be a (strongly) isotropic harmonic subbundle of $\underline{\mathbb{C}}^n$,
		\\(i) If $\underline{\phi}$ is $\partial^{'}$-irreducible.  Let $\underline{\alpha}$ be a holomorphic subbundle of $\underline{\phi}$ such that $\underline{\alpha}\subset ker A'_{\phi^{\perp}}\circ A'_{\phi}$, then,  the bundle $\underline{\widetilde{\phi}}$ given by $\underline{\widetilde{\phi}}=\underline{\phi}\bigcap \underline{\alpha}^{\bot}\oplus \underline{Im}(A'_{\phi}|\underline{\alpha})$ is harmonic;
		\\(ii) If $\underline{\phi}$ is harmonic with $\partial{'}\underline{\phi}$ of rank one and  $A''_{\phi}(\underline{ker}A^{'\perp}_{\phi})\neq 0$.  Let $\underline{\alpha}=\underline{ker}A'_{\phi}$, then backward replacement of $\underline{\beta}=\underline{\alpha}^{\perp}\bigcap \underline{\phi}$ produces a new harmonic map $\underline{\widetilde{\phi}}=\underline{\alpha}\oplus \underline{Im}(A''_{\phi}|\underline{\beta}): S^2 \rightarrow G(2,n;\mathbb{C})$, where $\partial'\widetilde{\underline{\phi}}=\underline{\beta},  \ \partial^{(i)}\widetilde{\underline{\phi}}= \partial^{(i-1)}\underline{\phi}$ for $i\geq 2$;\\(iii)
		If $\underline{\phi}$ is harmonic with $\partial{'}\underline{\phi}$ of rank one and $A''_{\phi}(\underline{ker}A^{'\perp}_{\phi})=0$. Then either (a) there is an antiholomorphic map $g:S^2\rightarrow \mathbb{C}P^{n-1}$ and $\underline{\phi}=\partial^{(-r)}\underline{g}\oplus \partial^{(-r-1)}\underline{g}$ for some integer $r\geq 0$, (it can be shown that $\phi$ is a Frenet pair) or (b) there are maps $g,h:S^2 \rightarrow \mathbb{C}P^{n-1}$ antiholomorphic and holomorphic respectively such that $\partial'\underline{h}\perp \underline{g}$ and $\underline{\phi}=\underline{g}\oplus \underline{h}$, i.e. $\underline{\phi}$ is a mixed pair.
	\end{lemma}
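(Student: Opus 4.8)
The plan is to read this statement as the $k=2$ specialization of Burstall and Wood's general replacement and terminal-classification theory for harmonic maps into $G(k,n;\mathbb{C})$ (\cite{4}), and to organize the proof around a single analytic criterion for harmonicity. Writing $A'_{\phi}=\pi_{\phi^{\perp}}\circ\partial|_{\underline{\phi}}$ and $A''_{\phi}=\pi_{\phi^{\perp}}\circ\overline{\partial}|_{\underline{\phi}}$ for the two second fundamental forms, the first step is to recall that $\underline{\phi}$ is harmonic if and only if $A'_{\phi}$ is holomorphic as a section of $\mathrm{Hom}(\underline{\phi},\underline{\phi}^{\perp})$ for the Koszul--Malgrange holomorphic structures induced by the $\overline{\partial}$-operators on $\underline{\phi}$ and $\underline{\phi}^{\perp}$ (equivalently $A''_{\phi}$ is antiholomorphic). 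Each of the three parts will then be verified against this criterion.

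For part (i) I would carry out the forward-replacement construction and check harmonicity of $\underline{\widetilde{\phi}}=(\underline{\phi}\cap\underline{\alpha}^{\bot})\oplus\underline{Im}(A'_{\phi}|\underline{\alpha})$ directly. Working in a local frame adapted to the refinement $\underline{\alpha}\oplus(\underline{\phi}\cap\underline{\alpha}^{\bot})\oplus\underline{Im}(A'_{\phi}|\underline{\alpha})\oplus\cdots$, holomorphicity of $\underline{\alpha}$ controls the derivatives entering and leaving $\underline{\alpha}$, the hypothesis $\underline{\alpha}\subset\ker(A'_{\phi^{\perp}}\circ A'_{\phi})$ is exactly what forces $A'_{\widetilde{\phi}}$ to be holomorphic on the replaced block, and strong isotropy annihilates the remaining cross terms between $\underline{\phi}$ and its higher derived bundles. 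Part (ii) is the dual backward-replacement computation: the hypothesis $\mathrm{rank}\,\partial'\underline{\phi}=1$ makes $\underline{\alpha}=\underline{\ker}A'_{\phi}$ of corank one in $\underline{\phi}$, so $\underline{\beta}=\underline{\alpha}^{\perp}\cap\underline{\phi}$ is a line bundle and $A''_{\phi}|_{\underline{\beta}}\neq0$ yields a well-defined replacement line; after checking harmonicity of $\underline{\widetilde{\phi}}=\underline{\alpha}\oplus\underline{Im}(A''_{\phi}|\underline{\beta})$ exactly as in (i), I would read off $\partial'\widetilde{\underline{\phi}}=\underline{\beta}$ and $\partial^{(i)}\widetilde{\underline{\phi}}=\partial^{(i-1)}\underline{\phi}$ for $i\geq2$ by comparing the two harmonic sequences index by index.

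Part (iii) is the terminal classification, and this is where the real work lies. The hypotheses $\mathrm{rank}\,\partial'\underline{\phi}=1$ and $A''_{\phi}(\underline{\ker}A^{'\perp}_{\phi})=0$ say precisely that neither the backward replacement of (ii) nor its mirror produces anything new, so $\underline{\phi}$ is an endpoint of the reduction procedure. I would then run the structure analysis on the rank-two bundle: decomposing $\underline{\phi}$ along the kernel filtration of $A'_{\phi}$ splits it into two line subbundles whose $\partial'$- and $\partial''$-behaviour is forced by the two vanishing conditions, and the dichotomy reduces to whether these two lines are linked by a single (anti)holomorphic curve together with its adjacent derived line --- giving the Frenet pair $\underline{\phi}=\partial^{(-r)}\underline{g}\oplus\partial^{(-r-1)}\underline{g}$ --- or are an independent antiholomorphic $\underline{g}$ and holomorphic $\underline{h}$ constrained only by $\partial'\underline{h}\perp\underline{g}$, giving the mixed pair $\underline{\phi}=\underline{g}\oplus\underline{h}$. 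The main obstacle I anticipate is the exhaustiveness of this dichotomy: one has to show that strong isotropy together with $A''_{\phi}(\underline{\ker}A^{'\perp}_{\phi})=0$ leaves no third configuration, which amounts to proving that each of the two constituent line subbundles extends to a globally (anti)holomorphic curve on all of $S^2$ and that no mixed intermediate behaviour survives, using that $S^2$ carries no nonconstant data of the excluded type.
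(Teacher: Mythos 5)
The paper does not prove this lemma at all: it is stated verbatim as a ``special case of \cite{4}'' (Burstall--Wood, Theorem 2.4 and Propositions 3.7, 3.8) and used as a black box, so there is no in-paper argument to compare yours against. Judged on its own terms, your outline correctly identifies the engine of the Burstall--Wood theory --- harmonicity of $\underline{\phi}$ is equivalent to holomorphicity of $A'_{\phi}$ (antiholomorphicity of $A''_{\phi}$) for the Koszul--Malgrange structures, and parts (i) and (ii) are forward and backward replacement verified against that criterion in an adapted frame. That is the right strategy for (i) and (ii), although even there you should note that $\underline{\ker}A'_{\phi}$ and $\underline{Im}(A'_{\phi}|\underline{\alpha})$ are a priori only defined off the zero set of the relevant bundle morphisms; the step of ``filling out zeros'' to obtain genuine global subbundles over $S^2$ is needed before the replacement bundles $\underline{\widetilde{\phi}}$ even make sense, and it is part of the proof, not a formality.

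The genuine gap is part (iii), and you flag it yourself without closing it. Saying that the two line subbundles $\underline{\alpha}=\underline{\ker}A'_{\phi}$ and $\underline{\beta}=\underline{\alpha}^{\perp}\cap\underline{\phi}$ are ``either linked by a single (anti)holomorphic curve or independent'' restates the conclusion rather than deriving it. What actually has to be shown is: (1) that the hypotheses force $\underline{\beta}$ to be a holomorphic line subbundle of $\underline{\phi}$ with $\overline{\partial}$-derivative staying inside $\underline{\phi}$, hence (after filling out zeros) a globally defined curve whose derived sequence is controlled; (2) that strong isotropy, fed through the vanishing of holomorphic differentials on $S^2$ (the Riemann sphere carries no nonzero holomorphic $k$-differentials for $k\geq 1$), yields the orthogonality relations $\partial'\underline{h}\perp\underline{g}$ in case (b) and forces $\underline{\alpha}$ to coincide with the next Gauss transform $\partial''\underline{\beta}$ in case (a); and (3) that the dichotomy is governed by whether the map $A''_{\phi^{\perp}}\circ A'_{\phi}$ restricted to $\underline{\beta}$ (equivalently, whether $\underline{\alpha}$ lies in the harmonic sequence generated by $\underline{\beta}$) vanishes identically --- a holomorphic section of a line bundle on $S^2$ that is either identically zero or nowhere zero off a finite set, which is what makes the alternative exhaustive. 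None of this is in your sketch, and it is precisely the content of Propositions 3.7--3.8 of \cite{4}. As written, your proposal is an accurate table of contents for the Burstall--Wood proof, not a proof.
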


	$A^{'}_{\phi}$ and $A^{''}_{\phi}$ shown in Lemma 5.2 are vector bundle morphisms from $\underline{\phi}$ to $\underline{\phi}^{\bot}$, they are defined by $A^{'}_{\phi}(v)=\pi_{\phi^{\bot}}(\partial v)$ and $A^{''}_{\phi}(v)=\pi_{\phi^{\bot}}(\overline{\partial} v)$ respectively for some $v\in\mathbb{C}^{\infty}(\underline{\phi})$ (cf. \cite{1, 4}).

	Let $\phi: S^2\rightarrow G(2,6;\mathbb{R})$ be a linearly full
	irreducible harmonic map with isotropy order $r = \infty$. In the following we characterize
	$\phi$ explicitly by virtue of Lemma 5.2.
	
	Since $\phi$ is a  (strongly) isotropic irreducible harmonic map from $S^2$ to $G(2,6;\mathbb{R})$, it belongs to the following harmonic sequence
	
	$$0\stackrel{\partial{''}} {\longleftarrow}
	\underline{\phi}_{-1}\stackrel{\partial{''}} {\longleftarrow}
	\underline{\phi}\stackrel {\partial{'}}
	{\longrightarrow}\underline{\phi}_1 \stackrel{\partial{'}}
	{\longrightarrow}0,$$
	where $\underline{\phi}_{-1}=\overline{\underline{\phi}}_1$ is of rank 2, and $\underline{\phi}$ can be expressed by $\underline{\phi}=\underline{\overline{X}}\oplus \underline{X}$, here $\{\underline{\overline{X}}, \underline{X}\}$ is the unique unordered holomorphic subbundles of rank one of $\underline{\phi}$ (cf. \cite{1}).

	Let $$\underline{Y}=A'_{\phi}|\underline{X}, \ Z = Y^{\bot} \cap {\phi}_1,$$ then we have $\overline{X},X,\overline{Y},Y,\overline{Z},Z$ are mutually orthogonal and $A'_{\phi^{\bot}}|Y=0$, i.e. $$\underline{X}\subset ker A'_{\phi^{\perp}}\circ A'_{\phi}.$$ Then by (i) of Lemma 5.2, $$\widetilde{\underline{\phi}}=\underline{\overline{X}}\oplus\underline{Y}$$ is harmonic. Through a straightforward computation, $\widetilde{\underline{\phi}}$ belongs to the following harmonic sequence
	
	\begin{equation}0\stackrel{\partial{''}} {\longleftarrow}
		\overline{\underline{Z}}\stackrel{\partial{''}}
		{\longleftarrow}\underline{X}\oplus\underline{\overline{Y}}
		\stackrel{\partial{''}} {\longleftarrow}
		\widetilde{\underline{\phi}}=\underline{\overline{X}}\oplus\underline{Y}\stackrel {\partial{'}}
		{\longrightarrow}\underline{Z}\stackrel{\partial{'}}
		{\longrightarrow}0,  \label{eq:5.1}\end{equation} which implies that subbundle $\underline{Z}$ is harmonic and antiholomorphic, without loss of generality, we assume $\underline{Z}=\underline{f}^{(m)}_m$, it is a linearly full harmonic map from $S^2$ to $\mathbb{C}P^m$  for some $m<6$ and belongs to the following harmonic sequence
	$$0\stackrel{\partial{'}} {\longrightarrow}
	\underline{f}^{(m)}_0\stackrel{\partial{'}}
	{\longrightarrow}\underline{f}^{(m)}_1
	\stackrel{\partial{'}} {\longrightarrow}
	\cdots\stackrel {\partial{'}}
	{\longrightarrow}\underline{f}^{(m)}_m\stackrel{\partial{'}}
	{\longrightarrow}0,$$ where $\overline{\partial}f^{(m)}_0=0$ and $f^{(m)}_0, f^{(m)}_1, \ldots, f^{(m)}_m$ satisfy \eqref{eq:2.9} and \eqref{eq:2.10}.
	
	By \eqref{eq:5.1},  $\underline{f}^{(m)}_{m-1}$ is a subbundle with rank one of $\underline{\widetilde{\phi}}$, let $W = f^{(m)\bot}_{m-1} \cap \widetilde{\phi}$, then \eqref{eq:5.1} can be rewritten as
	
	\begin{equation}0\stackrel{\partial{''}} {\longleftarrow}
		\overline{\underline{f}}^{(m)}_m\stackrel{\partial{''}}
		{\longleftarrow}\underline{\overline{W}}\oplus\underline{\overline{f}}^{(m)}_{m-1}
		\stackrel{\partial{''}} {\longleftarrow}
		\widetilde{\underline{\phi}}= \underline{W}\oplus\underline{f}^{(m)}_{m-1}\stackrel {\partial{'}}
		{\longrightarrow}\underline{f}^{(m)}_m\stackrel{\partial{'}}
		{\longrightarrow}0.  \label{eq:5.2}\end{equation} 
	Here $\overline{W},W,\overline{f}^{(m)}_{m-1}, f^{(m)}_{m-1}, \overline{f}^{(m)}_m,f^{(m)}_m$ are mutually orthogonal and $\underline{W}$ is a holomorphic subbundle of $\widetilde{\underline{\phi}}$, it satisfies $A'_{\widetilde{\phi}}|W=0$ and $A''_{\widetilde{\phi}}|f^{(m)}_{m-1}\neq 0$, i.e. $$\underline{W}=\underline{ker}A'_{\widetilde{\phi}}, \quad A''_{\widetilde{\phi}}(\underline{ker}A^{'\perp}_{\widetilde{\phi}})\neq 0.$$ Then by (ii) of Lemma 5.2, the backward replacement of $f^{(m)}_{m-1}$ produces a new harmonic map $$\underline{\varphi}=\underline{\overline{W}}\oplus \underline{W}: S^2\rightarrow G(2,6;\mathbb{R}),$$ it derives a harmonic sequence as follows
	
	\begin{equation}0\stackrel{\partial{''}} {\longleftarrow}
		\overline{\underline{f}}^{(m)}_m\stackrel{\partial{''}} {\longleftarrow}
		\overline{\underline{f}}^{(m)}_{m-1}\stackrel{\partial{''}} {\longleftarrow}
		\underline{\varphi}=\underline{\overline{W}}\oplus\underline{W}\stackrel {\partial{'}}
		{\longrightarrow}\underline{f}^{(m)}_{m-1} \stackrel {\partial{'}}
		{\longrightarrow}\underline{f}^{(m)}_m \stackrel{\partial{'}}
		{\longrightarrow}0.  \label{eq:5.3}\end{equation}

	Then we prove the following result.
	
	\begin{prop}
		$m=2$ if  $\phi: S^2\rightarrow G(2,6;\mathbb{R})$ is a linearly full totally unramified
		irreducible (strongly) isotropic  harmonic map of constant curvature. 
	\end{prop}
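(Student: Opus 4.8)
The plan is to isolate a single integral invariant of the configuration, evaluate it twice—once as data of the length-$(m+1)$ sequence in $\mathbb{C}P^m$ carrying $Z=f^{(m)}_m$, and once as data of a three–term sequence in $\mathbb{C}P^2$ produced by the construction \eqref{eq:5.3}—and then force the two values to coincide. This coincidence will read off $m=2$.

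First I would exploit the structure already established. In \eqref{eq:5.3} the backward replacement yields $\underline{\varphi}=\underline{\overline W}\oplus\underline W$ with $\underline W$ holomorphic, together with the terminating forward chain $\underline{\varphi}\stackrel{\partial'}{\longrightarrow}\underline f^{(m)}_{m-1}\stackrel{\partial'}{\longrightarrow}\underline f^{(m)}_m\stackrel{\partial'}{\longrightarrow}0$. Since $\underline{\overline W}$ is antiholomorphic it contributes nothing under $\partial'$, so $\partial' W=f^{(m)}_{m-1}$, $\partial'^2 W=f^{(m)}_m$ and $\partial'^3 W=0$. Thus the osculating spaces of the holomorphic curve $W$ have rank $3$; equivalently $W$ is linearly full in a $\mathbb{C}P^2$, and $W,\,f^{(m)}_{m-1},\,f^{(m)}_m$ is a linearly full harmonic sequence in that $\mathbb{C}P^2$.

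Next I would bring in total unramifiedness. The defining sequence $\underline f^{(m)}_0,\dots,\underline f^{(m)}_m$ of $Z=f^{(m)}_m$ is a linearly full, totally unramified harmonic sequence in $\mathbb{C}P^m$; and, after verifying that unramifiedness is inherited through the modifications \eqref{eq:5.1}--\eqref{eq:5.3}, the three–term sequence $W,\,f^{(m)}_{m-1},\,f^{(m)}_m$ is linearly full and totally unramified in $\mathbb{C}P^2$. I now evaluate the single function $l=\,|f^{(m)}_m|^2/|f^{(m)}_{m-1}|^2$. Inside the $\mathbb{C}P^m$ sequence it is the last ratio $l^{(m)}_{m-1}$, so \eqref{eq:3.3} gives $\frac{1}{2\pi\sqrt{-1}}\int_{S^2} l\,d\overline z\wedge dz=\delta^{(m)}_{m-1}=(m)(1)=m$. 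Inside the $\mathbb{C}P^2$ sequence the very same $l$ is the ratio $l^{(2)}_1$ attached to the second edge, so the same integral equals $\delta^{(2)}_1=(1+1)(2-1)=2$. Since one integral cannot have two values, $m=2$.

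The constant-curvature hypothesis enters through Lemma 3.4: it forces the underlying curve $f^{(m)}_0$ to be, up to a holomorphic isometry, a Veronese curve, so that \eqref{eq:2.9}--\eqref{eq:2.10} hold exactly and the degrees $\delta^{(m)}_i$ attain their unramified values, making $\delta^{(m)}_{m-1}=m$ directly available. The step I expect to be the main obstacle is precisely the bookkeeping in the middle paragraph: showing that total unramifiedness of $\phi$ genuinely descends to the auxiliary sequence $W,\,f^{(m)}_{m-1},\,f^{(m)}_m$ obtained after the two backward replacements of Lemma 5.2, i.e. that the relevant $\det(\Omega\Omega^{*})$ are nowhere zero on $S^2$, so that \eqref{eq:3.3} may be legitimately applied in $\mathbb{C}P^2$. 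Once this nonvanishing is in hand the comparison of degrees is immediate and yields $m=2$.
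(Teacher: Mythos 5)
There is a genuine gap, and it sits exactly where you predicted the ``main obstacle'' would be --- but it is not a bookkeeping issue about unramifiedness; it is a circularity. Your argument hinges on the claim that $\underline{W},\ \underline{f}^{(m)}_{m-1},\ \underline{f}^{(m)}_{m}$ is a linearly full harmonic sequence in a $\mathbb{C}P^{2}$, i.e.\ that there is a rank-one \emph{holomorphic curve} $V\subset\underline{\varphi}$ with $\partial'V=\underline{f}^{(m)}_{m-1}$ and $\partial''V=0$. But the backward image of a line bundle in $\mathbb{C}P^{5}$ is intrinsic: by \eqref{eq:2.10}, $\partial''\underline{f}^{(m)}_{m-1}=\underline{f}^{(m)}_{m-2}$, so any line bundle $V$ with $\partial'V=\underline{f}^{(m)}_{m-1}$ must be $\underline{f}^{(m)}_{m-2}$, and then $\partial''V=\underline{f}^{(m)}_{m-3}$, which vanishes if and only if $m\le 2$. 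In other words, the statement ``the auxiliary sequence closes up in $\mathbb{C}P^{2}$'' is equivalent to the conclusion $m=2$, not a consequence of \eqref{eq:5.3}. What \eqref{eq:5.3} actually provides is $\partial'\underline{\varphi}=\underline{f}^{(m)}_{m-1}$ for the \emph{rank-two} bundle $\underline{\varphi}=\underline{\overline{W}}\oplus\underline{W}$; the assertion that $\underline{\overline{W}}$ ``contributes nothing under $\partial'$'' and that $\underline{W}$ alone generates a length-three sequence does not follow (indeed $\underline{f}^{(m)}_{m-2}=\partial''\partial'\underline{\varphi}\subset\underline{\varphi}$, and for $m\ge 3$ that subbundle carries the whole tail $f^{(m)}_{0},\dots,f^{(m)}_{m-3}$ behind it). A further warning sign is that your degree comparison never genuinely uses constant curvature: \eqref{eq:3.3} needs only total unramifiedness, and your appeal to Lemma 3.4 to make $f^{(m)}_{0}$ Veronese is unfounded anyway, since constancy of the curvature of $\phi$ does not transfer to $f^{(m)}_{0}$.

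The paper's route is quite different and shows where constant curvature really enters. One first uses total unramifiedness, $L_{0}=L_{-1}$, $L_{1}=0$ and Lemma 3.3 to get $\delta_{0}=4$, hence $K=\tfrac12$ and the explicit induced metric \eqref{eq:5.7}; then $\underline{\phi}_{-1}$ is a holomorphic curve of constant curvature $1$, so Calabi rigidity (Lemma 3.4) applied to its Pl\"ucker image forces $|f\wedge g|^{2}=(1+z\overline{z})^{4}$, bounding all entries of the holomorphic sections $f,g$ by polynomials of degree $\le 4$; finally the isotropy relations $\langle \overline{f},f\rangle=0$, $\langle\overline{g},g\rangle=0$, etc., successively kill the degree-$4$ and degree-$3$ coefficients, so every holomorphic section of $\underline{\phi}_{-1}$ has degree $\le 2$, which via \eqref{eq:5.3} yields $m=2$. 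If you want to salvage a degree-counting argument, it has to be run on $\underline{\phi}_{-1}$ (where holomorphicity is actually known), not on a hypothetical $\mathbb{C}P^{2}$-sequence through $f^{(m)}_{m-1}$.
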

	\begin{proof}
		Suppose $\phi:S^2 \rightarrow G(2,6;\mathbb{R})$ is of constant curvature $K$.  From the above discussion, we choose local frame
		$$e_1 = \frac{\overline{X}}{|X|},  \  e_2 = \frac{X}{|X|}, \
		e_3 = \frac{Y}{|Y|},  \ e_4 = \frac{f^{(m)}_m}{|f^{(m)}_m|}, \ e_5 =
		\frac{\overline{Y}}{|Y|},  \  e_6 = \frac{\overline{f}^{(m)}_m}{|f^{(m)}_m|},$$
		here the local frame we choose  is unitary
		frame. Set $$W_0 = (e_1, e_2), \quad W_1 = (e_3, e_4), \quad W_{-1} = (e_5,
		e_6),$$ then by \eqref{eq:2.5}, we obtain
		$$\Omega_{-1} = -\left(\begin{array}{ccccccc}
		\frac{\langle
			\partial X,  Y\rangle }{|X||Y|} & 0 \\
		\frac{\langle
			\partial \overline{X},  Y\rangle }{|X||Y|}&  \frac{\langle\partial \overline{X}, f^{(m)}_{m}\rangle }{|X||f^{(m)}_{m}|}  \end{array}\right),   \quad \Omega_{0} =
		\left(\begin{array}{ccccccc}\frac{\langle
			\partial \overline{X},  Y\rangle }{|X||Y|} & \frac{\langle
			\partial X,  Y\rangle }{|X||Y|} \\
		\frac{\langle\partial \overline{X}, f^{(m)}_{m}\rangle }{|X||f^{(m)}_{m}|} & 0
		\end{array}\right).$$
		This together with  equation $L_{i} =\textrm{tr}
		(\Omega_{i} \Omega^*_{i})$ implies that
		\begin{equation}L_0 =L_{-1}= \frac { \langle
				\partial \overline{X} , Y\rangle \langle Y, \partial \overline{X} \rangle }{|X|^2|Y|^2} +\frac { \langle
				\partial X , Y\rangle \langle Y, \partial X \rangle }{|X|^2|Y|^2}+\frac { \langle
				\partial \overline{X} , f^{(m)}_{m}\rangle \langle f^{(m)}_{m}, \partial \overline{X} \rangle }{|X|^2|f^{(m)}_{m}|^2}. \label{eq:5.4}\end{equation}

		On the one hand, since $\phi$ is totally unramified,  it follows from
		\eqref{eq:3.2} and \eqref{eq:5.4} that \begin{equation} \delta_{-1} = \delta_0, \  \delta_1 = 0.\label{eq:5.5}\end{equation}
		On the other hand, by Lemma 3.3 we have
		\begin{equation}\delta_1 - 2\delta_0 + \delta_{-1} = -4, \label{eq:5.6}\end{equation}
		where $\delta_{i} = \frac{1}{2\pi\sqrt{-1} }\int_{S^2}
		L_{i}d{\overline{z}} \wedge dz,\ i =-1, 0, 1$. Substitution of \eqref{eq:5.5} in \eqref{eq:5.6} yields
		$$\delta_0 = 4.$$
		This formula and the fact that $\phi$ is of constant curvature enable us to set $K=\frac{1}{2}$, and complex coordinate $z$
		on $\mathbb{C} =S^2 \backslash \{pt\}$ can be chosen so that  the induced  metric
		$ds^2 = 2L_0dz d{\overline{z}}$ of
		$\phi$ is given by $$ds^2 =
		\frac{8}{(1+z\overline{z})^2} dz d{\overline{z}},$$ where
		\begin{equation}L_0
			=\frac{4}{(1+z\overline{z})^2}. \label{eq:5.7}\end{equation}
		
		From the fact that $\phi$ is irreducible and (strongly) isotropic, the harmonic sequence it derived can be rewritten as 
		$$0\stackrel{\partial{'}} {\longrightarrow}
		\underline{\phi}_{-1}\stackrel{\partial{'}} {\longrightarrow}
		\underline{\phi}\stackrel {\partial{'}}
		{\longrightarrow}\underline{\phi}_1 \stackrel{\partial{'}}
		{\longrightarrow}0,$$ here $\underline{\phi}_{-1}$ is a holomorphic curve with constant curvature $1$.  In the following we shall prove that for any holomorphic section of $\underline{\phi}_{-1}$, its degree will be $2$, i.e. $m=2$. 
		
		Let $f(z)$ and $g(z)$ be two holomorphic sections such that $\underline{\phi}_{-1}=\Pi \{ f(z), g(z) \}$.  Pl\"{u}cker imbedding {\cite{14}}
		$$[F]=[f(z) \wedge g(z) ]: S^2 \rightarrow \mathbb{C}P^{14}$$
		is a nowhere zero holomorphic curve. This is a holomorphic isometry, i.e., 
		$$[F]^*ds^2_{ \mathbb{C}P^{14}}=\phi_{-1}^*ds^2_{ G(2,6;\mathbb{R})}.$$
		Then $\phi_{-1}$ and $[F]$ have the same curvature $1$.  Set 
		$$\left(\begin{array}{ccccccc}
		f(z)  \\
		g(z)  \end{array}\right)=
		\left(\begin{array}{ccccccc}
		1,&0,& a(z),& b(z),& c(z),& d(z) \\
		0,& 1,& p(z),& q(z),& r(z), & s(z) \end{array}\right).$$
		By Lemma 3.4, there exists the unitary matrix $U\in U(15)$ such that
		$$f(z)\wedge g(z)={V}_0^{(4)}U,$$
		where  $V_0^{(4)}$ is the Veronese curve in $\mathbb{C}P^4$ given in Section 3 (adding zeros to $V_0^{(4)}$ such that $V_0^{(4)} \in \mathbb{C}^{15}$). Thus it is very evident  that
		\begin{equation}| f(z)  \wedge g(z)  |^2=(1+z\overline{z})^4,\label{eq:5.8}\end{equation}
		which shows  $a(z),b(z),c(z),d(z)$ and $p(z),q(z),r(z),s(z)$ are all polynomials in $z$ with degree $<5$, i.e.,
		$$a(z)=a_1z+a_2z^2+a_3z^3+a_4z^4, \ b(z)=b_1z+b_2z^2+b_3z^3+b_4z^4,$$
		$$c(z)=c_1z+c_2z^2+c_3z^3+c_4z^4, \ d(z)=d_1z+d_2z^2+d_3z^3+d_4z^4,$$
		$$p(z)=p_1z+p_2z^2+p_3z^3+p_4z^4, \ q(z)=q_1z+q_2z^2+q_3z^3+q_4z^4,$$
		$$r(z)=r_1z+r_2z^2+r_3z^3+r_4z^4, \ s(z)=s_1z+s_2z^2+s_3z^3+s_4z^4.$$
		By \eqref{eq:5.8},  $a(z)q(z)-b(z)p(z), \  a(z)r(z)-c(z)p(z), \ a(z)s(z)-d(z)p(z), \ b(z)r(z)-c(z)q(z)$, $b(z)s(z)-d(z)q(z) $ and  $c(z)s(z)-d(z)r(z) $ are also polynomials in $z$ with degree $<5$, then
		$$\frac{a_4}{p_4}=\frac{b_4}{q_4}=\frac{c_4}{r_4}=\frac{d_4}{s_4}.$$ Hence there exist the U(4)-transformation $I_2 \times U_4$ so that
		$$(a_4, b_4, c_4, d_4)U_4=(0, 0, 0, \widetilde{d}_4), \quad (p_4, q_4, r_4, s_4)U_4=(0, 0, 0, \widetilde{s}_4),$$ and $f(z)$ and $g(z)$ are unitarily equivalent to 
		$$\left(\begin{array}{ccccccc}
		1,&0,& a_1z+a_2z^2+a_3z^3, & b_1z+b_2z^2+b_3z^3, & c_1z+c_2z^2+c_3z^3, &  \sum_{i=1}^{4}d_iz^i\\
		0,& 1,& p_1z+p_2z^2+p_3z^3, & q_1z+q_2z^2+q_3z^3, & r_1z+r_2z^2+r_3z^3, & \sum_{i=1}^{4}s_iz^i \end{array}\right),$$
		where $I_2$ is the $2 \times 2$ unit matrix and $U_4  \in U(4)$ (in the absence of confusion, we also use letters $a_i, b_i, c_i$ and $d_i$).
		
		From the fact that $f(z)$ is a holomorphic section of $\phi_{-1}$, it is easy to see that $\overline{f}(z)$ is an antiholomorphic  section of $\underline{\phi}_{1}$, 
		and then we arrive at the following equation 
		$$\langle\overline{f}(z), f(z)\rangle =0$$
		from the fact that $\phi_{-1}$ and $\phi_{1}$ are mutually orthogonal,
		which verifies
		$$d_4=0.$$
		With a similar discussion for $g(z)$ we also obtain $s_4=0$ and then, $f(z)$ and $g(z)$ become
		$$\left(\begin{array}{ccccccc}
		1,&0,& a_1z+a_2z^2+a_3z^3, & b_1z+b_2z^2+b_3z^3, & c_1z+c_2z^2+c_3z^3, & \sum_{i=1}^{3}d_iz^i\\
		0,& 1,& p_1z+p_2z^2+p_3z^3, & q_1z+q_2z^2+q_3z^3, & r_1z+r_2z^2+r_3z^3, & \sum_{i=1}^{3}s_iz^i\end{array}\right).$$

		Using the same method, it is not difficult for us to get
		$a_3=b_3=c_3=d_3=0$ and $p_3=q_3=r_3=s_3=0$   and $f(z)$ and $g(z)$ can be finally expressed as $$\left(\begin{array}{ccccccc}
		1,&0,& a_1z+a_2z^2, & b_1z+b_2z^2, & c_1z+c_2z^2, & d_1z+d_2z^2\\
		0,& 1,& p_1z+p_2z^2, & q_1z+q_2z^2, & r_1z+r_2z^2, & s_1z+s_2z^2 \end{array}\right).$$
		Therefore for any holomorphic section of $\phi_{-1}$, its degree $\leq 2$. This together with  \eqref{eq:5.3} implies that 
		$$m=2,$$  which finishes the proof.
	\end{proof}

	With Proposition 5.3,  the harmonic sequence given in  \eqref{eq:5.3} becomes
	\begin{equation}0\stackrel{\partial{''}} {\longleftarrow}
		\overline{\underline{f}}^{(2)}_2\stackrel{\partial{''}} {\longleftarrow}
		\overline{\underline{f}}^{(2)}_{1}\stackrel{\partial{''}} {\longleftarrow}
		\underline{\varphi}=\underline{\overline{W}}\oplus\underline{W}\stackrel {\partial{'}}
		{\longrightarrow}\underline{f}^{(2)}_{1} \stackrel {\partial{'}}
		{\longrightarrow}\underline{f}^{(2)}_2 \stackrel{\partial{'}}
		{\longrightarrow}0. \label{eq:5.9}\end{equation}
	Here $\underline{f}^{(2)}_{0}$ is a subbundle with rank one of $\underline{\varphi}$, let $\alpha = f^{(2)\bot}_{0} \cap\varphi$, then it satisfies
	$\underline{\alpha}= \underline{ker}A'_{\varphi}$ and $\underline{f}^{(2)}_{0}=\underline{\alpha}^{\bot}\cap\underline{\varphi}=\underline{ker}A^{'\perp}_{\varphi}$,
	which establishes that
	\begin{equation}A''_{\varphi}(\underline{ker}A^{'\perp}_{\varphi})= 0.\label{eq:5.10}\end{equation}
	In \eqref{eq:5.9}, $\overline{\underline{f}}^{(2)}_{0}$ and $\underline{f}^{(2)}_{0}$ are both  subbundles of $\underline{\varphi}$. \eqref{eq:5.10} together with relation $\underline{\varphi}=\underline{\alpha}\oplus \underline{f}^{(2)}_{0}=\underline{\overline{W}}\oplus\underline{W}$ imply that
	$$\underline{\alpha}=\underline{W}=\overline{\underline{f}}^{(2)}_{0}, \quad \underline{\overline{W}}=\underline{f}^{(2)}_{0},$$ i.e.
	$\underline{\varphi}=\overline{\underline{f}}^{(2)}_{0}\oplus\underline{f}^{(2)}_{0}$ is a real mixed pair, which is consist with (iii) of Lemma 5.2.
	Harmonic sequences \eqref{eq:5.2} and \eqref{eq:5.9} become
	$$0\stackrel{\partial{''}} {\longleftarrow}
	\overline{\underline{f}}^{(2)}_2\stackrel{\partial{''}}
	{\longleftarrow}\underline{f}^{(2)}_0\oplus\underline{\overline{f}}^{(2)}_{1}
	\stackrel{\partial{''}} {\longleftarrow}
	\widetilde{\underline{\phi}}= \overline{\underline{f}}^{(2)}_0\oplus\underline{f}^{(2)}_{1}\stackrel {\partial{'}}
	{\longrightarrow}\underline{f}^{(2)}_2\stackrel{\partial{'}}
	{\longrightarrow}0,$$
	$$0\stackrel{\partial{''}} {\longleftarrow}
	\overline{\underline{f}}^{(2)}_2\stackrel{\partial{''}} {\longleftarrow}
	\overline{\underline{f}}^{(2)}_{1}\stackrel{\partial{''}} {\longleftarrow}
	\underline{\varphi}=\underline{\overline{f}}^{(2)}_0\oplus\underline{f}^{(2)}_0\stackrel {\partial{'}}
	{\longrightarrow}\underline{f}^{(2)}_{1} \stackrel {\partial{'}}
	{\longrightarrow}\underline{f}^{(2)}_2 \stackrel{\partial{'}}
	{\longrightarrow}0.$$ Here $\underline{\overline{f}}^{(2)}_{2},  \underline{\overline{f}}^{(2)}_{1}, \underline{\overline{f}}^{(2)}_{0},  \underline{f}^{(2)}_{0},  \underline{f}^{(2)}_{1},  \underline{f}^{(2)}_{2}$ are mutually orthogonal, and $X$ can be put as 
	$$X= \overline{f}^{(2)}_1+x_1f^{(2)}_0,$$ 
	where  $x_1$ is a smooth function on $S^2$ expect some isolated
	points. Let $$Y= -|f^{(2)}_0|^2x_1f^{(2)}_1+|f^{(2)}_1|^2\overline{f}^{(2)}_0,$$ it satisfies $\underline{Y}=\underline{\overline{X}}^{\bot}\cap \widetilde{\underline{\phi}}$. Applying the equation $\underline{Y}=A'_{\phi}|\underline{X}$ we obtaion
	\begin{equation}\partial{x_1}+x_1\partial{\log{|f^{(2)}_0|^2}}=0.\label{eq:5.11}\end{equation} Hence we have

	\begin{prop}
		Let   $\phi: S^2\rightarrow G(2,6;\mathbb{R})$  be a linearly  
		full irreducible totally unramified  (strongly) isotropic harmonic map with constant curvature, then $\underline{\phi}=
		\underline{\overline{X}} \oplus \underline{X}$ with $X= \overline{f}^{(2)}_1+x_1f^{(2)}_0$, where $\overline{f}^{(2)}_2, \overline{f}^{(2)}_1, \overline{f}^{(2)}_0, f^{(2)}_0, f^{(2)}_1, f^{(2)}_2$ are mutually orthogonal and the corresponding coefficient $ x_1$ satisfies  equation \eqref{eq:5.11}.
	\end{prop}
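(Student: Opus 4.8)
The plan is to convert the structural information already produced by Lemma 5.2 and Proposition 5.3 into explicit data for the line bundle $\underline{X}$. First, Proposition 5.3 forces $m=2$, so the backward-replacement sequence (5.3) collapses to (5.9) and every bundle in sight is manufactured from a single linearly full sequence $\underline{f}^{(2)}_0,\underline{f}^{(2)}_1,\underline{f}^{(2)}_2$ in $\mathbb{C}P^2\subset\mathbb{C}P^5$. I would next pin down $\underline{\varphi}$: the relation (5.10), $A''_{\varphi}(\underline{\ker}A^{'\perp}_{\varphi})=0$, places $\underline{\varphi}$ in alternative (iii) of Lemma 5.2, and combined with $\underline{\varphi}=\underline{\alpha}\oplus\underline{f}^{(2)}_0=\underline{\overline{W}}\oplus\underline{W}$ this forces $\underline{W}=\overline{\underline{f}}^{(2)}_0$ and $\underline{\overline{W}}=\underline{f}^{(2)}_0$. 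Hence $\underline{\varphi}=\overline{\underline{f}}^{(2)}_0\oplus\underline{f}^{(2)}_0$ is a real mixed pair, and the six line bundles $\overline{\underline{f}}^{(2)}_2,\overline{\underline{f}}^{(2)}_1,\overline{\underline{f}}^{(2)}_0,\underline{f}^{(2)}_0,\underline{f}^{(2)}_1,\underline{f}^{(2)}_2$ are mutually orthogonal, yielding the fully ordered forms of (5.2) and (5.9).

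To recover $\underline{X}$ I would exploit that the forward replacement of Lemma 5.2(i) gives $\widetilde{\underline{\phi}}=\underline{\overline{X}}\oplus\underline{Y}$ with $\underline{Y}=A'_{\phi}|\underline{X}$, while the rewritten form of (5.2) identifies $\widetilde{\underline{\phi}}=\overline{\underline{f}}^{(2)}_0\oplus\underline{f}^{(2)}_1$. Thus $\underline{\overline{X}}$ is a line in $\overline{\underline{f}}^{(2)}_0\oplus\underline{f}^{(2)}_1$; conjugating and using that $\underline{\phi}$ is real (so $\overline{\underline{\phi}}=\underline{\phi}$ and the two summands are mutually conjugate), $\underline{X}\subset\underline{f}^{(2)}_0\oplus\overline{\underline{f}}^{(2)}_1$. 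Normalizing the $\overline{f}^{(2)}_1$-coefficient to $1$ then gives $X=\overline{f}^{(2)}_1+x_1f^{(2)}_0$ for some function $x_1$ that is smooth off a finite subset of $S^2$.

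Finally I would derive the constraint (5.11). Setting $Y=-|f^{(2)}_0|^2x_1f^{(2)}_1+|f^{(2)}_1|^2\overline{f}^{(2)}_0$, a direct check gives $\langle Y,\overline{X}\rangle=0$, confirming $\underline{Y}=\underline{\overline{X}}^{\bot}\cap\widetilde{\underline{\phi}}$. Imposing $\underline{Y}=A'_{\phi}|\underline{X}=\pi_{\phi^{\perp}}(\partial X)$ and expanding $\partial X$ via the Frenet relations (2.9)--(2.10), together with $\partial\overline{f}^{(2)}_1=-\frac{|f^{(2)}_1|^2}{|f^{(2)}_0|^2}\overline{f}^{(2)}_0$ and $\langle\partial f^{(2)}_0,f^{(2)}_0\rangle/|f^{(2)}_0|^2=\partial\log|f^{(2)}_0|^2$, the surviving $\overline{f}^{(2)}_1$- and $f^{(2)}_0$-components of $\pi_{\phi^{\perp}}(\partial X)$ vanish precisely when $\partial x_1+x_1\partial\log|f^{(2)}_0|^2=0$, which is (5.11). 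Since the substantive rigidity has already been absorbed into Proposition 5.3, the only genuine obstacle left is the bookkeeping of the projection onto $\underline{\phi}^{\perp}$; but with all six frame bundles mutually orthogonal this reduces to evaluating the two inner products $\langle\partial X,X\rangle$ and $\langle\partial X,\overline{X}\rangle$, and I expect it to be routine once the mixed-pair normalization is in place.
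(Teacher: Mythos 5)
Your proposal follows the paper's own argument essentially step for step: use Proposition 5.3 to force $m=2$, identify $\underline{\varphi}$ as the real mixed pair $\overline{\underline{f}}^{(2)}_0\oplus\underline{f}^{(2)}_0$ from \eqref{eq:5.10} and Lemma 5.2(iii), deduce the mutual orthogonality of the six line bundles, read off $X=\overline{f}^{(2)}_1+x_1f^{(2)}_0$ from $\widetilde{\underline{\phi}}=\overline{\underline{f}}^{(2)}_0\oplus\underline{f}^{(2)}_1$ by conjugation and reality of $\underline{\phi}$, and obtain \eqref{eq:5.11} by equating $\underline{Y}=\underline{\overline{X}}^{\bot}\cap\widetilde{\underline{\phi}}$ with $\pi_{\phi^{\perp}}(\partial X)$ via \eqref{eq:2.9}--\eqref{eq:2.10}. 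The proof is correct and, if anything, supplies slightly more detail than the paper at the final projection step.
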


	For any linearly full
	irreducible totally unramified harmonic map $\phi:S^2 \rightarrow G(2,6;\mathbb{R})$ with constant curvature $K$ and isotropy order $r=\infty$,  from the above discussion we easily see that
	$$\Omega_0 = \left(\begin{array}{ccccccc}
	\frac{\langle
		\partial \overline{X} , Y\rangle }{|X||Y|} & -\frac{|f^{(2)}_1|}{|f^{(2)}_0|} \\
	\frac{|f^{(2)}_2|}{|X|} & 0 \end{array}\right),   \quad \Omega_{-1} =
	\left(\begin{array}{ccccccc}\frac{|f^{(2)}_1|}{|f^{(2)}_0|} & 0 \\
	-\frac{\langle
		\partial \overline{X} , Y\rangle }{|X||Y|} & -\frac{|f^{(2)}_2|}{|X|}
	\end{array}\right),$$
	\begin{equation}L_0 =L_{-1}= \frac { \langle
			\partial \overline{X} , Y\rangle \langle Y, \partial \overline{X} \rangle }{|X|^2|Y|^2} +
		\frac{|f^{(2)}_2|^2}{|X|^2} + l^{(2)}_0,   \  L_1=0, \label{eq:5.12}\end{equation}
	\begin{equation}|\det \Omega_0|^2 dz^2 d{\overline{z}}^2 = [l^{(2)}_0]^2l^{(2)}_1
		\frac{|f^{(2)}_0|^2}{|X|^2}dz^2
		d{\overline{z}}^2,\label{eq:5.13}\end{equation}
	and 
	\begin{equation}\partial \overline{\partial} \log|\det \Omega_0|^2 =L_{-1} - 2
		L_{0} + L_{1}\label{eq:5.14}\end{equation}  by direct computation.

	From the assumption that $\phi$ is totally unramified,  we find $|\det
	\Omega_0|^2 dz^2 d{\overline{z}}^2 \neq 0$ everywhere on $S^2$ and $\underline{f}_0^{(2)}, \ \underline{f}_1^{(2)}, \
	\underline{f}_2^{(2)}: S^2 \rightarrow \mathbb{C}P^{2} \subset
	\mathbb{C}P^{6}$ are also  totally unramified from \eqref{eq:5.13}. In this case, we prove

	\begin{prop}
		Let $\phi:S^2 \rightarrow G(2,6;\mathbb{R})$ be a linearly full
		irreducible totally  unramified harmonic map of Gauss curvature
		$K$.  Suppose that $K$ is constant, then $\phi$ is  (strongly) isotropic and totally geodesic and, up to an isometry of $G(2,6;\mathbb{R})$, $
		\underline{\phi} =
		\overline{\underline{U}}\overline{\underline{V}}^{(2)}_1 \oplus
		\underline{U}\underline{V}^{(2)}_1 $ with $K = \frac{1}{2}$ for some $U\in G_W$, where W has the form \eqref{eq:4.17}. Here  W can be found in different types, thus exist different  $U \in U(6)$ such that $UV^{(2)}_1$ are linearly full in $G(2,6;\mathbb{R})$, and they are not $SO(6)-$ equivalent.
	\end{prop}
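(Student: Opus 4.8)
The plan is to reduce $\phi$ to the strongly isotropic case, identify its two line factors as rigid-motion images of the middle Veronese curve $V^{(2)}_1$ via Lemma 3.4, and then read off the reality constraint on $W=U^TU$. First I would rule out finite isotropy order: since $\phi$ is linearly full, irreducible, totally unramified and of constant curvature while $n=6$ is even, (\cite{10}, Proposition 4.3) (recorded just before Lemma 5.2) leaves no such immersion of finite isotropy order, so $\phi$ has isotropy order $r=\infty$ and is strongly isotropic. Propositions 5.3 and 5.4 then apply: $m=2$, the six local sections $\overline{f}^{(2)}_2,\dots,f^{(2)}_2$ are mutually orthogonal, $\underline{\phi}=\underline{\overline{X}}\oplus\underline{X}$ with $X=\overline{f}^{(2)}_1+x_1 f^{(2)}_0$ and $x_1$ obeying \eqref{eq:5.11}, and (from the proof of Proposition 5.3) the curvature is already pinned to $K=\frac{1}{2}$ with induced metric $ds^2=\frac{8}{(1+z\bar{z})^2}dzd\bar{z}$.

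Next I would exploit the mixed-pair structure to bring in rigidity. The line factor $\underline{X}$ is a harmonic map $X:S^2\to\mathbb{C}P^5$, and by conjugation $\underline{\overline{X}}$ carries the same induced metric, so the metric of $\phi$ is exactly twice that of $X$; hence $X$ has constant curvature $1$ and induced metric $\frac{4}{(1+z\bar{z})^2}dzd\bar{z}$. By Proposition 5.3 the relevant degree is $2$, so $X$ is linearly full in a $\mathbb{C}P^2$ and is non-holomorphic; Lemma 3.4 then forces $X$ to be the unique Veronese member of curvature $1$ there, namely $X=UV^{(2)}_1$ for some $U\in U(6)$ (the other $K=1$ candidate $V^{(4)}_0$ is holomorphic of degree $4$, hence excluded). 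This yields $\underline{\phi}=\overline{\underline{U}}\,\overline{\underline{V}}^{(2)}_1\oplus\underline{U}\underline{V}^{(2)}_1$.

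It then remains to extract the form of $W$, to prove total geodesy, and to record the non-rigidity. That $\phi$ takes values in $G(2,6;\mathbb{R})$ together with strong isotropy forces the orthogonality relations $\langle UV^{(2)}_i,\overline{U}\,\overline{V}^{(2)}_j\rangle=0$, i.e.\ $\mathrm{tr}\,(WV^{(2)}_iV^{(2)T}_j)=0$ for $i,j\in\{0,1,2\}$, where $W=U^TU\in U(6)$ and $W^T=W$; expanding these polynomials in $z,\bar{z}$ and matching coefficients by the method of indeterminate coefficients, exactly as in Lemma 4.2, annihilates the upper-left $3\times3$ block and produces precisely the form \eqref{eq:4.17}. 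Total geodesy I would verify by computing $A_z$ and $P=\partial(A_z/\lambda^2)$ with $\lambda^2=\frac{8}{(1+z\bar{z})^2}$ from the explicit frame and checking $\|B\|^2=4\,\mathrm{tr}\,PP^*=0$ in \eqref{eq:2.7}. Finally, since the conditions on $W$ leave free parameters, distinct admissible $W$ yield maps no two of which are $SO(6)$-equivalent, which is the asserted non-rigidity.

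The step I expect to be the main obstacle is the clean identification $X=UV^{(2)}_1$: one must justify that the line factor $X$ is a genuine harmonic immersion whose induced metric is half that of $\phi$, so that constant curvature transfers, and that its linear fullness is confined to a $\mathbb{C}P^2$ of degree $2$ rather than to the degree-$4$ holomorphic alternative. Once rigidity is in force the remaining work, namely the coefficient matching that produces \eqref{eq:4.17} and the verification $\|B\|^2=0$, is a finite if somewhat lengthy computation.
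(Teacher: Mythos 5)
Your reduction to the strongly isotropic case, the invocation of Propositions 5.3 and 5.4, and the endgame (coefficient matching for $W$ and the computation $\|B\|^2=0$) all match the paper. But the step you yourself flag as the main obstacle is a genuine gap, and it is exactly where the paper's proof does its real work. After Proposition 5.4 you only know $X=\overline{f}^{(2)}_1+x_1 f^{(2)}_0$ with $x_1$ satisfying \eqref{eq:5.11}; as long as $x_1$ is not known to vanish, the line bundle $\underline{X}$ is not known to be harmonic as a map into $\mathbb{C}P^5$, its induced metric is not simply half that of $\phi$ (the metric $2L_0\,dzd\overline{z}$ of $\phi$ is built from $\Omega_0,\Omega_{-1}$, i.e.\ from projections of $\partial X,\partial\overline{X}$ onto $\underline{\phi}^{\perp}$, whereas the $\mathbb{C}P^5$-metric of $[X]$ also sees the component of $\partial X$ along $\overline{X}$), and its image need not lie in any $\mathbb{C}P^2$ (the span of $\overline{f}^{(2)}_1$ and $f^{(2)}_0$ over $S^2$ can be all of $\mathbb{C}^6$). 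So Lemma 3.4 cannot be applied to $X$ at that stage; doing so presupposes $x_1=0$, which is precisely the conclusion you need.

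The paper closes this gap analytically rather than by rigidity: from \eqref{eq:5.7} and \eqref{eq:5.12}--\eqref{eq:5.14} it derives $\partial\overline{\partial}\log\frac{(1+z\overline{z})^{4}|f^{(2)}_0|^2}{|F^{(2)}_0|^6|X|^2}=0$, globalizes this to $|X|^2=c(1+z\overline{z})^{4}/|f^{(2)}_0|^4$, rewrites it as $|h(\overline{z})|^2+|F^{(2)}_1|^2=c(1+z\overline{z})^{4}/|f^{(2)}_0|^2$ where $h=x_1|f^{(2)}_0|^2$ is an antiholomorphic polynomial by \eqref{eq:5.11}, and then a degree count on these polynomial identities (using $\delta^{(2)}_0=2$) forces $|f^{(2)}_0|^2=\mu(1+z\overline{z})^2$ and $h=0$. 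Only then does $X=\overline{f}^{(2)}_1$ follow, after which Calabi--Bolton rigidity is applied to the holomorphic curve $f^{(2)}_0$ to conclude $f^{(2)}_1=UV^{(2)}_1$, and the computation $P=0$ gives total geodesy. You would need to supply this (or an equivalent) argument for the vanishing of $x_1$ before your identification $X=UV^{(2)}_1$ and the subsequent steps can go through.
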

	\begin{proof}
		Consider local lift of the $i$-th osculating curve $F_i^{(2)} = f^{(2)}_0
		\wedge \ldots \wedge f^{(2)}_i$ ($i=0, 1, 2$), here we choose a nowhere
		zero holomorphic $\mathbb{C}^6$-valued function $f^{(2)}_0$ such that $F_i^{(2)}$ is
		a nowhere zero holomorphic curve and it is a polynomial function on
		$\mathbb{C}$ of degree $\delta^{(2)}_i$ satisfying
		$\partial\overline{\partial}\log |F_i^{(2)}|^2 = l^{(2)}_i$. So using
		\eqref{eq:5.7} \eqref{eq:5.12}  \eqref{eq:5.13}
		and \eqref{eq:5.14}, we obtain
		\begin{equation}
			\partial \overline{\partial} \log \frac{ (1 +
				z\overline{z})^{4}|f^{(2)}_0|^2}{|F_0^{(2)}|^6|X|^2}
			=0.\label{eq:5.15}\end{equation}
		From \eqref{eq:5.13} we know that
		$\frac{|f^{(2)}_0|^2}{|X|^2} l^{(2)}_0$ is a globally defined function without
		zeros on $S^2$. Then it follows from \eqref{eq:3.3}  that $$\frac{ (1 +
			z\overline{z})^{4}|f^{(2)}_0|^2}{|F_0^{(2)}|^6|X|^2}=\frac{(1+z\overline{z})^{4}}{|F_0^{(2)}|^2|F_1^{(2)}|^2}\cdot \frac{|f^{(2)}_0|^2}{|X|^2}l_0^{(2)}$$ is globally defined
		on $\mathbb{C}$ and has a positive constant limit $\frac{1}{c}$ as
		$z\rightarrow \infty$. Thus  \eqref{eq:5.15} gives us that
		$$\frac{ (1 +
			z\overline{z})^{4}|f^{(2)}_0|^2}{|F^{(2)}_0|^6|X|^2} = \frac{1}{c},$$
		i.e.
		\begin{equation}|X|^2 = \frac{c(1+z\overline{z})^{4}}{|f^{(2)}_0|^4}.\label{eq:5.16}\end{equation}
		Applying the equation $X  = \overline{f}^{(2)}_1 +  x_3f^{(2)}_0$,
		\eqref{eq:5.16} can be rearranged as
		\begin{equation}|x_3|^2|F^{(2)}_0|^4 + |F^{(2)}_1|^2 =
			\frac{c(1+z\overline{z})^{4}}{|f^{(2)}_0|^2}.
			\label{eq:5.17}\end{equation}

		In view of \eqref{eq:5.11} we get $\partial(x_3|f^{(2)}_0|^2) =
		0$. 
		Observing \eqref{eq:5.17}, from the fact that  both $|F_1^{(2)}|^2$
		and $\frac{(1+z\overline{z})^{4}}{|f^{(2)}_0|^2}$ have no singular
		points except $z=\infty$,  we have $x_3|f^{(2)}_0|^2$ is a antiholomorphic
		function on $\mathbb{C}$ at most with the pole $z=\infty$. So it is
		a polynomial function about $\overline{z}$. Without loss of generality,  set
		$$x_3|f^{(2)}_0|^2 = h(\overline{z}),$$ the formula \eqref{eq:5.17} is rewritten as \begin{equation}|h(\overline{z})|^2+|F^{(2)}_1|^2 =
			\frac{c(1+z\overline{z}) ^{4}}{|f^{(2)}_0|^2}.
			\label{eq:5.18}\end{equation} Since both sides of \eqref{eq:5.18}
		are polynomial functions and $\delta^{(2)}_0=2$, then we have
		\begin{equation}|f^{(2)}_0|^2 = \mu (1+z\overline{z})^2, \label{eq:5.19}\end{equation} where $\mu$ is a real
		parameter.

		Here we claim that $h=0$. Otherwise if $h\neq 0$, then $1+z\overline{z}$ is a factor of it, which
		contracts the fact that $h$ is antiholomorphic. Thus we have $h=0$,
		which implies that the function $x_1$ should vanish, i.e. $x_1=0$. Then
		$$X = \overline{f}_1^{(2)} , \quad \underline{\phi} = \underline{\overline{f}}_1^{(2)} \oplus
		\underline{f}_1^{(2)}.$$

		As to the second fundamental form $B$ of $\phi$,  by \eqref{eq:2.7} and a series of calculations, we obtain
		$$\left\{
		\begin{array} {l}
		\partial\phi=\frac{1}{|f_1^{(2)}|^2}[\overline{f}_1^{(2)}(\overline{f}_{2}^{(2)})^{\ast}+f_{2}^{(2)}f_1^{(2)\ast}]-\frac{1}{|f^{(2)}_0|^2}[\overline{f}^{(2)}_0(\overline{f}^{(2)}_{1})^{\ast}+f^{(2)}_{1}f_0^{(2)\ast}],
		\\ A_z=\frac{1}{|f^{(2)}_1|^2}[\overline{f}_1^{(2)}(\overline{f}_{2}^{(2)})^{\ast}-f_{2}^{(2)}f_1^{(2)\ast}]+\frac{1}{|f^{(2)}_0|^2}[\overline{f}_0^{(2)}(\overline{f}^{(2)}_{1})^{\ast}-f^{(2)}_{1}f_0^{(2)\ast}],
		\\ P=0.
		\end{array} \right.$$  It is trival that $\|B\|^2=0$, i.e. $\phi$
		is totally geodesic.

		From \eqref{eq:5.19}, by Lemma 3.4, up to a holomorphic isometry of
		$\mathbb{C}P^{5}$, ${f}_1^{(2)}$ is a Veronese surface. We can choose a
		complex coordinate $z$ on $\mathbb{C}=S^2\backslash\{pt\}$ so that $
		{f}_1^{(2)} = {U}{V}^{(2)}_1$, where $U\in U(6)$ and ${V}^{(2)}_1$ has the
		standard expression given in Section 3 (adding zeros to
		${V}^{(2)}_1$ such that ${V}^{(2)}_1 \in \mathbb{C}^6$). Thus we
		have
		$$\underline{\phi}=\underline{\overline{U}}\underline{\overline{V}}^{(2)}_1\oplus
		\underline{U}\underline{V}^{(2)}_1.$$   
		This finishes the proof.
	\end{proof}

	\begin{remark}
		\emph{In Proposition 5.5, to determine} $\phi$\emph{, we
			just need to determine the matrix} $U$.
		\emph{From the above discussion, we have}
		$$\langle {f}_0^{(2)}, \overline{f}_2^{(2)}\rangle=0,$$ \emph{which is equivalent to}
		\begin{equation}\textrm{tr} U^{T}UV^{(2)}_0 V^{(2)T}_{2}= 0, \label{eq:5.20}
		\end{equation}  \emph{here} $V^{(2)}_0V^{(2)T}_2$ \emph{is a
			polynomial matrix in} $z$ \emph{and} $\overline{z}$ \emph{by the standard expressions of} $V^{(2)}_0$ \emph{and} $V^{(2)}_2$, \emph{and} $U$  \emph{is a constant matrix.
			Using the method of indeterminate coefficients by}
		\eqref{eq:5.20}, \emph{put} $U^TU=(w_{ij}), \ 0\leq i,j \leq 5$,
		\emph{by direct computation it can be expressed in the same with  \eqref{eq:4.17}.} \emph{Here an easy example is to set} $U=U_0$ \emph{as the one shown in} \eqref{eq:4.18}.  \emph{In this case} $\underline{\phi} = \overline{\underline{U}}_0\overline{\underline{V}}^{(2)}_1 \oplus
		\underline{U}_0\underline{V}^{(2)}_1 =
		\overline{\underline{f}}^{(2)}_1 \oplus
		\underline{f}^{(2)}_1 $ \emph{has Gauss curvature} $K =
		\frac{1}{2}$, \emph{where} \begin{equation}f^{(2)}_1 = U_0V^{(2)}_1=[(-\sqrt{2}\overline{z}, -\sqrt{-2}\overline{z}, 1-z\overline{z}, \sqrt{-1}(1-z\overline{z}), \sqrt{2}z, \sqrt{-2}z)^{T}]. \label{eq:5.21}\end{equation}
	\end{remark}

	By Proposition 4.3 and Proposition 5.5, we conclude a
	classification of conformal minimal immersions of constant curvature
	from $S^2$ to $G(2,6;\mathbb{R})$ as follows:

	\begin{theorem}
		Let $\phi:S^2 \rightarrow G(2,6;\mathbb{R})$ be a linearly full
		conformal minimal immersion with Gauss curvature $K$. Suppose that $K$ is constant, then, up to an isometry of $G(2,6;\mathbb{R})$,
		\\(i) If $\phi$ is reducible with finite isotropy order,   either $\underline{\phi}=\overline{\underline{U}}\overline{\underline{V}}^{(3)}_0\oplus \underline{U}\underline{V}^{(3)}_0$ with $K= \frac{2}{3}$, or $\underline{\phi}=\overline{\underline{U}}\overline{\underline{V}}^{(2)}_0\oplus \underline{U}\underline{V}^{(2)}_0$ with $K= 1$  for some $U\in G_W$, where W has the form \eqref{eq:4.13} or \eqref{eq:4.15}  respectively;
		\\(ii) If $\phi$ is reducible and (strongly) isotropic,   either $\underline{\phi} = \underline{U}\underline{V}^{(4)}_2 \oplus
		\underline{c}_0 $ with  $K= \frac{1}{3}$ for some $U \in U(5)$, or $\underline{\phi}=\overline{\underline{U}}\overline{\underline{V}}^{(2)}_0\oplus \underline{U}\underline{V}^{(2)}_0$ with $K= 1$ for some $U\in G_W$, where W has the form \eqref{eq:4.17};
		\\(iii) If $\phi$ is totally unramified  irreducible, then, it is (strongly) isotropic and totally geodesic,  $ \underline{\phi} =
		\overline{\underline{U}}\overline{\underline{V}}^{(2)}_1 \oplus
		\underline{U}\underline{V}^{(2)}_1 $ with  $\ K =
		\frac{1}{2}$ for some $U\in G_W$, where W has the form \eqref{eq:4.17}.
		In each case, there are many different types of W, thus exist different  $U \in U(6)$ such that corresponding $UV^{(m)}_0 (m=2, 3)$ or $UV^{(2)}_1$ are  linearly full in $G(2,6;\mathbb{R})$, and they are not $SO(6)-$ equivalent.
	\end{theorem}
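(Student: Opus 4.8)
The plan is to assemble the classification from the structural results already in place, since all the substantive computations have been carried out in Proposition 4.3 and Proposition 5.5. First I would note that a linearly full conformal minimal immersion $\phi: S^2 \to G(2,6;\mathbb{R})$ is in particular a harmonic map, so it is either reducible or irreducible, and I would dispose of these two alternatives separately. The whole proof is then a matter of citing the right proposition and sorting its output by isotropy order.

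For the reducible case I would invoke Proposition 4.3 verbatim: it lists exactly the four possibilities for a linearly full reducible harmonic map of constant curvature. The only remaining task is to partition these four cases according to the isotropy order, using the dichotomy recorded at the start of Section 4 (after \cite{1}, Proposition 2.12): a reducible $\phi$ is either a real mixed pair of finite isotropy order $r=1$, or it is (strongly) isotropic. The maps $\overline{\underline{U}}\,\overline{\underline{V}}^{(3)}_0\oplus\underline{U}\,\underline{V}^{(3)}_0$ and $\overline{\underline{U}}\,\overline{\underline{V}}^{(2)}_0\oplus\underline{U}\,\underline{V}^{(2)}_0$ with $W$ of the form \eqref{eq:4.13} or \eqref{eq:4.15} are precisely the finite-isotropy-order mixed pairs produced in Lemma 4.2; these give part (i). The two remaining possibilities, namely the family with $W$ of the form \eqref{eq:4.17} and the family $\underline{U}\,\underline{V}^{(4)}_2\oplus\underline{c}_0$, are the strongly isotropic ones and give part (ii).

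For the irreducible case I would first eliminate finite isotropy order. By (\cite{1}, Proposition 2.8 and Lemma 2.15) an irreducible $\phi$ of finite isotropy order satisfies $r=1$, and then (\cite{10}, Proposition 4.3) shows that, because $n=6$ is even, there is no linearly full totally unramified irreducible immersion of constant curvature with finite isotropy order. Hence a totally unramified irreducible $\phi$ of constant curvature is forced to have $r=\infty$, i.e. to be (strongly) isotropic. Proposition 5.5 then applies directly and yields part (iii), including the assertions that $\phi$ is totally geodesic with $K=\tfrac12$ and that $\underline{\phi}=\overline{\underline{U}}\,\overline{\underline{V}}^{(2)}_1\oplus\underline{U}\,\underline{V}^{(2)}_1$ with $W$ of the form \eqref{eq:4.17}.

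The non-congruence statement in each case — that genuinely distinct admissible $W$ occur, producing $U$ that are not $SO(6)$-equivalent — is already part of each cited proposition, so I would simply transport it. The single delicate point is the bookkeeping in the reducible case: one must verify that Proposition 4.3's four cases split into ``finite isotropy order'' and ``(strongly) isotropic'' exactly as claimed, and in particular that the two $K=1$ families (with $W$ of forms \eqref{eq:4.15} and \eqref{eq:4.17}) are separated precisely by isotropy order and not by curvature. I expect this matching, together with the parity argument excluding irreducible finite-isotropy-order maps, to be the crux of the write-up; everything else reduces to quoting the preceding propositions.
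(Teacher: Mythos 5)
Your proposal matches the paper's own argument: Theorem 5.7 is stated as an immediate consequence of Proposition 4.3 (sorted by isotropy order via the dichotomy from \cite{1}, Proposition 2.12) and Proposition 5.5 (after excluding irreducible finite isotropy order by the parity argument from \cite{10}, Proposition 4.3). The bookkeeping you flag as the ``crux'' is exactly what the paper leaves implicit, and your partition of the four reducible cases is the correct one.
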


	Theorem 5.7 shows that, up to an isometry of
	$G(2,6;\mathbb{R})$,  conformal minimal immersions of constant
	curvature from $S^2$ to $G(2,6;\mathbb{R})$, or equivalently, a
	complex hyperquadric $Q_{4}$  can be presented by the Veronese
	surfaces in $\mathbb{C}P^{2},      \mathbb{C}P^{3}$ or $\mathbb{C}P^{4}$.

	Let $\phi$ be a linearly full conformal minimal immersion
	of constant curvature from $S^2$ to $G(2,6;\mathbb{R})$,  and
	$f_{\phi}$ be the corresponding map of $\phi$ from $S^2$  to
	$Q_{4}$. In Theorem 5.7,   for cases  $\underline{\phi}=\overline{\underline{U}}\overline{\underline{V}}^{(2)}_0\oplus \underline{U}\underline{V}^{(2)}_0$,  $\underline{\phi}=\overline{\underline{U}}\overline{\underline{V}}^{(3)}_0\oplus \underline{U}\underline{V}^{(3)}_0$  and $\underline{\phi} =
	\overline{\underline{U}}\overline{\underline{V}}^{(2)}_1 \oplus
	\underline{U}\underline{V}^{(2)}_1$,  the corresponding maps of $\phi$ from  $S^2$ to $Q_4$ are minimal (cf.  \eqref{eq:4.14},  \eqref{eq:4.16}, \eqref{eq:4.20}, \eqref{eq:5.21}), which are also minimal from  $S^2$ to $\mathbb{C}P^5$. But for $\underline{\phi} = \underline{U}_1\underline{V}^{(4)}_2 \oplus
	\underline{c}_0 $, from \eqref{eq:4.21},  the corresponding map $f_{\phi}: S^2\rightarrow Q_4$ of $\phi$ is as follows
	$$\begin{array}{lll} \underline{f}_{\phi} & = & \left[ (\sqrt{3}(z^2 + \overline{z}^2), \
	\sqrt{-3}( \overline{z}^2 - z^2 ), \
	\sqrt{3}(z+\overline{z})(z\overline{z} - 1), \right. \\
	& &\left.  \sqrt{-3}(\overline{z} - z)(z\overline{z} - 1), \
	1-4z\overline{z} + z^2\overline{z}^2, \ \sqrt{-1}(1+z\overline{z})^2
	)^T \right]: S^2 \rightarrow Q_4 \subset \mathbb{C}P^5.\end{array}$$
	By a simple test, we can check that it is not minimal
	in $\mathbb{C}P^5$.


\begin{thebibliography}{90}
		\bibitem{1}
		A. Bahy-El-Dien and J.C. Wood, \emph{The explicit construction of
			all harmonic two-spheres in $G_2(R^n)$}, J. reine angew. Math.,
		398(1989), 36-66.
		\bibitem{2}
		J. Bolton, G.R. Jensen, M. Rigoli and L.M. Woodward, \emph{On
			conformal minimal immersions of $S^2$ into $CP^n$}, Math. Ann.,
		279(1988), 599-620.
		\bibitem{3}
		R.L. Bryant, \emph{Minimal surfaces of constant curvature in $S^n$},
		Trans. Amer. Math. Soc., 290(1985), 259--271.
		\bibitem{4}
		F.E. Burstall and J.C. Wood, \emph{The construction of  harmonic
			maps  into  complex Grassmannians}, J. Diff. Geom., 23(1986),
		255--297.
		\bibitem{5}
		E. Calabi, \emph{Isometric embedding of complex manifolds}, Ann.
		Math.,  58(1953), 1-23.
		\bibitem{6}
		Q.S. Chi and Y.B. Zheng, \emph{Rigidity of pseudo-holomorphic curves
			of constant curvature in Grassmann manifolds}, Trans. Amer. Math.
		Soc., 313(1989), 393-406.
		\bibitem{7}
		S. Erdem and J.C. Wood, \emph{On the construction of harmonic maps
			into a Grassmannian}, J. London Math. Soc., 28(1)(1983), 161-174.
		\bibitem{8}
		P. Griffiths and J. Harris, \emph{Principles of Algebraic Geometry}, Wiley, New York, 1978.
		\bibitem{9}
		X.X. Jiao, \emph{Pseudo-holomorphic curves of constant curvature in
			complex Grassmannians}, Israel J. Math., 163(1)(2008), 45--60.
		\bibitem{10}
		X.X. Jiao and M.Y. Li, \emph{Classification of conformal minimal immersions of constant curvature from $S^2$ to $Q_n$}, Annali di Matematica., 196(2017),1001-1023.
		\bibitem{11}
		X.X. Jiao and M.Y. Li, \emph{On conformal minimal immersions of
			two-spheres in a complex hyperquadric with parallel second
			fundamental form}, J. Geom. Anal., 26(2016),185-205.
		\bibitem{12}
		K. Kenmotsu and K. Masuda, \emph{On minimal surfaces of constant
			curvature in two-dimensional complex space form}, J. reine angew.
		Math., 523(2000), 69--101.
		\bibitem{13}
		M.Y. Li, X.X. Jiao and L. He \emph{Classification of conformal
			minimal immersions of constant curvature from $S^2$ to $Q_3$ },
		J. Math. Soc. Japan, 68(2)(2016), 863-883.
		\bibitem{14}
		K. Uhlenbeck, \emph{Harmonic maps into Lie groups (classical
			solutions of the chiral model)}, J. Diff. Geom., 30(1989), 1-50.
		\bibitem{15}
		J.G. Wolfson, \emph{Harmonic maps of the two-sphere into the complex
			hyperquadric}, J. Diff. Geom., 24(1986), 141--152.
	\end{thebibliography}
\end{document}